\theoremstyle{plain}
\newtheorem{thm}{Theorem}%[section]
\newtheorem{prp}[thm]{Proposition}
\newtheorem{lem}[thm]{Lemma}
\newtheorem{cor}[thm]{Corollary}
\newtheorem{defi}[thm]{Definition}
\newcommand{\R}{\mathbb{R}} 
\newcommand{\N}{\mathbb{N}} 
\begin{document}
\author{David Barbato and Francesco Morandin}
\title{Positive and non-positive solutions\\ for an inviscid dyadic model.\\ Well-posedness and regularity.}
\maketitle
\begin{abstract}
We improve regolarity and uniqueness results from the literature for
the inviscid dyadic model.  We show that positive dyadic is globally
well-posed for every rate of growth $\beta$ of the scaling
coefficients $k_n=2^{\beta n}$. Some regularity results are proved for
positive solutions, namely $\sup_n
n^{-\alpha}k_n^{\frac13}X_n(t)<\infty$ for a.e.\ $t$ and $\sup_n
k_n^{\frac13-\frac1{3\beta}}X_n(t)\leq Ct^{-1/3}$ for all $t$.
Moreover it is shown that under very general hypothesis, solutions
become positive after a finite time.
\end{abstract}

\section*{Introduction}
Well-posedness and regularity for Navier-Stokes and Euler equations
represent a major point of interest in mathematics. The study of
estimates of the nonlinear term $(u\cdot\triangledown) u$ in
particular is important, since this term is associated with the
so-called cascade of energy from lower to higher modes.

A very rough idea of this phenomenon is the following. Fix a time $t$
and decompose the velocity $u$ on the frequencies $u=\sum_k u_k e_k$,
where $e_k$ are ``wave functions'' (for example $e_k=\sin(\langle
k,\cdot\rangle)$ or $e_k=e^{i\langle k,\cdot\rangle}$) and $u_k$ are the
corresponding coefficients (with $\sum_k |u_k|^2=\|u\|_{L^2}^2
$). Then the regularity of $u$ can be associated to how fast the
coefficients $u_k$ go to zero as $|k|$ tends to infinity. More
precisely, $u$ has $N$-th derivative in $L^2$ if $\sum_k(|k|^N
|u_k|)^2<\infty$. The bilinear term $(u\cdot\triangledown) u$ acts on
the dynamics of the coefficients $u_k$ by mixing different components,
in that if $h$ and $k$ are two active frequencies ($u_h,u_k\neq 0$),
then the term $(u\cdot\triangledown) u$ will activate the component
$h+k$ and so on, activating higher and higher frequencies in a
phenomenon called \emph{energy cascade}~\cite{Bif}.

In this paper we study the inviscid dyadic model, which is a
shell-type model of Euler equations which was early introduced
in~\cite{DesNov1974} and then again in recent times
in~\cite{KatPav} and~\cite{Wal}. This model, represented in
equations~\eqref{e:dyadic_unviscous} below, exploits some of the
properties of Euler equations (see among the others~\cite{Bif}
and~\cite{EggGro}) on a much simpler structure. Informal derivations
of this model from Euler equations are given in~\cite{DesNov1974},
\cite{KatPav} and~\cite{Che}; from an intuitive point
of view, one should imagine that the variable $X_n$
in~\eqref{e:dyadic_unviscous} represents a global coefficient for all
components $u_k$ with $|k|$ of order $2^n$. These equations, like
Euler's, are homogeneous of degree 2, they are formally conservative
and moreover they show the energy cascade phenomenon, which in this
setting is very clearly understood~\cite{BarFlaMor2011TAMS}.

To collect previous results on the dyadic model, one should
immediately distinguish whether the initial condition has all positive
components or not, since the dynamics of energy cascade is strongly
dependent on the sign of $X_n$. If all $X_n$'s are positive, energy
moves from lower modes to higher ones.  If all $X_n$'s are negative,
energy moves from higher modes to lower ones.
In~\cite{BarFlaMor2011TAMS} and~\cite{CheFriPav2010} it is shown that
in the case of positive components, energy moves to higher modes
faster and faster, in such a way that a positive fraction of energy
gets lost ``at infinity'' in finite time and one can show that in this
case the energy $\|X(t)\|_{l^2}^2$ goes to zero like $t^{-2}$. On the
other hand in~\cite{CheFriPav2010} it is proved that if a positive
forcing term is included in the model (which puts energy into the
first component), the energy converges exponentially fast to a fixed
value (corresponding to the stationary solution). In both cases the
model is not really conservative in the end, since energy moves to
infinity and there it disappears: this phenomenon is called anomalous
dissipation.

On the other hand, if all components are negative, the opposite
situation can occur: energy can enter from ``infinity'' into the
system. In~\cite{BarFlaMor2011TAMS} explicit solutions are costructed
in which there is an anomalous increase of energy, immediately
yielding non-uniqueness of solutions for the negative dyadic.

In this paper we prove that the positive dyadic is globally
well-posed, extending the uniqueness result
in~\cite{BarFlaMor2010CRAS} to arbitrary rate of growth of the
coefficients $k_n$ in system~\eqref{e:dyadic_unviscous}. This means in
particular that the escape of energy at infinity does never preclude
well-posedness; on the contrary, in the negative dyadic, the input of
energy from infinity immediately destroys well-posedness.

It is interesting to confront this with the stochastic dyadic model
introduced in~\cite{BarFlaMor2010PAMS}, where the distinction between
positive and negative solutions is meaningless, since noise causes
infinite sign changes in every time interval.
In~\cite{BarFlaMor2011AAP} and~\cite{BarFlaMor2010PAMS} it is proved
that there is escape of energy at infinity and (weak) uniqueness of
solutions, so also in this case energy cannot enter from infinity and
the problem is well-posed.

Section~\ref{s:negative} deals with the connection between negative and
positive dyadic. It is shown that under minimal hypothesis all
components become positive in finite time and stay positive
forever. Based on this fact all the rest of the paper is restricted to
positive initial conditions.

Sections~\ref{s:regularity}, \ref{sec:uniqueness}
and~\ref{s:invariantregion} deal with uniqueness and regularity and
are very much interlinked. Uniqueness was already proved
in~\cite{BarFlaMor2010CRAS} for $k_n=2^{\beta n}$ and $\beta\leq1$ and
in~\cite{BarMorRom} for any $\beta$ in a class of regular enough
solutions. It is now extended by Theorem~\ref{thm_unicita} to
arbitrary $\beta$ and $l^2$ initial condition.

As already stated we are interpreting the components of the solution
as something similar to Fourier coefficients, so regularity means
smallness of components $X_n$ for $n$ large. The vague idea is that
$X_n$ tends to zero as $k_n^{-1/3}$. The first results in the
literature are of lack of regularity. In~\cite{Che}, \cite{KatPav},
\cite{CheFriPav2010}, \cite{KisZla} and~\cite{BarFlaMor2011TAMS} it is
shown that if the initial condition is in $l^2$, then all solutions
are Leray-Hopf and nontheless a blow-up occurs in finite time, in the
sense that for all $\epsilon>0$ the quantities
\begin{align*}
&\sup_n k_n^{\frac{1}{3}+\epsilon} X_n(t),&
&\sum_n k_n^{\frac{1}{3}} X_n(t),&
&\sup_n \int_0^t k_n^{(1+\epsilon)} X^3_n(s)ds,
\end{align*}
become infinite in finite time, even if they were finite for $t=0$.

On the other hand one first important regularity result can be found
again in~\cite{CheFriPav2010}, where for $\beta<3$, the authors prove
that for all $\epsilon>0$,
\[
\sup_n \int_0^t (k_n^{1/3-\epsilon}X_n(s))^2 ds<+\infty.
\]

Our main results on regularity are Theorem~\ref{thm_reg1},
Lemma~\ref{lem_intX3_bound}, Theorem~\ref{thm_inv_reg} and
Theorem~\ref{cor_limit_X_3} which, through some corollaries, imply that
\begin{enumerate}
\item $\displaystyle\sup_{n}n^{-\alpha} k_n^{\frac13} X_n(t)<\infty$ for all
  $\alpha>\frac13$ and for a.e.\ $t>0$;
\item $\displaystyle\sum_{n}n^{-\alpha} k_n^{\frac13} X_n(t)<\infty$ for all
  $\alpha>\frac43$ and for a.e.\ $t>0$;
\item $\displaystyle\sup_n \int_0^t n^{-1}k_n X^3_n(s)ds<\infty$  for all $t>0$;
\end{enumerate}
and moreover
\begin{enumerate}\setcounter{enumi}{3}
\item $\displaystyle\sup_{n} k_n^{\frac{1}{3}-\frac1{3\beta}} X_n(t)\leq Ct^{-1/3}$  for
  all $t>0$.
\end{enumerate}

\section{Model}
A very natural space for the dynamics of the dyadic is $H:=l^2(\R)$,
the Hilbert space of square-summable sequences with the usual norm
which we will denote simply by $\| \cdot \|$.

Let $\beta>0$ and $x=(x_n)_{n\geq1}\in H$. Consider the following
Cauchy problem
\begin{equation}\label{e:dyadic_unviscous}
  \begin{cases}
    \dot X_n
      = k_{n-1} X_{n-1}^2
        -k_n X_n X_{n+1},\\
    X_n(0)=x_n,
  \end{cases}
  \qquad n\geq1\quad t\geq0
\end{equation}
where $X_0=0$, $k_0=0$ and $k_n = 2^{\beta n}$ for $n\geq1$.

\begin{defi}
A weak solution is a sequence $X=(X_n)_{n\geq1}$ of differentiable
functions on all $[0,\infty)$, satisfying~\eqref{e:dyadic_unviscous}.

A finite energy solution is a weak solution such that $X(t)$ is in $H$
for all $t\geq0$.
\end{defi}

The following proposition (whose proof is immediate) shows that
without loss of generality we can suppose that the initial condition
$x$ has positive first component $x_1>0$ and arbitrarily small norm.

\begin{prp}\label{prp_x1_pos}
Suppose $(X_n)_{n\geq1}$ is a weak solution
of~\eqref{e:dyadic_unviscous} with initial condition $x\neq0$. We
denote by $\bar{n}$ its first non-zero index: $\bar
n:=\min\{n\geq1:x_n\neq0\}$, so that $X_n\equiv0$ for $n<\bar{n}$,
while $X_{\bar n}(t)\neq0$ for all $t>0$.  Let $\alpha>0$ and let, for
$n\geq1$,
\begin{gather} 
\label{eq_cambio_sca_1}
Y_n(t):=
\begin{cases}
 X_n(t) & n\neq \bar{n}\\
-X_n(t) & n=\bar{n}
\end{cases}, \qquad\qquad
Z_n(t):=X_{n+\bar{n}-1}\left(\frac{t}{k_{\bar{n}-1}}\right),\\
\label{eq_cambio_sca_2}
W_n(t):=\alpha X_n(\alpha t).
\end{gather}
Then all of the above, $(Y_n)_{n\geq1}$,
$(Z_n)_{n\geq1}$ and $(W_n)_{n\geq1}$, are weak
solutions of \eqref{e:dyadic_unviscous} each one with its own initial
condition.
\end{prp}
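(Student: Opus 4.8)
The plan is to verify every assertion by direct substitution, once the behaviour of $X$ on the indices below $\bar n$ has been pinned down. First I would show by induction on $n$ that $X_n\equiv0$ for $n<\bar n$: the base case is $X_0\equiv0$, and if $X_{n-1}\equiv0$ with $x_n=0$ and $n<\bar n$, then the first line of~\eqref{e:dyadic_unviscous} reduces to the homogeneous linear equation $\dot X_n=-k_nX_{n+1}(t)\,X_n$ with continuous coefficient (each $X_{n+1}$ is differentiable, hence continuous), whose solution through $0$ is $X_n(t)=x_n\exp\big(-k_n\int_0^tX_{n+1}(s)\,ds\big)\equiv0$. The same computation at $n=\bar n$ (where $X_{\bar n-1}\equiv0$, either by the induction or because $\bar n=1$ and $X_0=0$) gives $X_{\bar n}(t)=x_{\bar n}\exp\big(-k_{\bar n}\int_0^tX_{\bar n+1}(s)\,ds\big)$, which is never zero since $x_{\bar n}\neq0$; in particular this is even slightly stronger than the stated $X_{\bar n}(t)\neq0$ for $t>0$.

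For $(W_n)$ the claim is just homogeneity of degree $2$ of the right-hand side together with the chain rule: differentiating $W_n(t)=\alpha X_n(\alpha t)$ produces a factor $\alpha^2$, and the same $\alpha^2$ appears in $W_{n-1}^2$ and in $W_nW_{n+1}$, so $\dot W_n=k_{n-1}W_{n-1}^2-k_nW_nW_{n+1}$ with initial datum $\alpha x_n$, and differentiability on $[0,\infty)$ is clear.

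For $(Y_n)$ I would check the equation index by index. For $n\notin\{\bar n-1,\bar n,\bar n+1\}$ the three relevant components $Y_{n-1},Y_n,Y_{n+1}$ coincide with the corresponding $X$'s and there is nothing to prove; for $n=\bar n-1$ every term vanishes since $Y_{\bar n-1}=X_{\bar n-1}\equiv0$ and $Y_{\bar n-2}=X_{\bar n-2}\equiv0$; and at $n=\bar n+1$ the sign change is absorbed by the square $Y_{\bar n}^2=X_{\bar n}^2$, leaving the equation unchanged. The only delicate case is $n=\bar n$: there $\dot Y_{\bar n}=-\dot X_{\bar n}=-k_{\bar n-1}X_{\bar n-1}^2+k_{\bar n}X_{\bar n}X_{\bar n+1}$ must match $k_{\bar n-1}Y_{\bar n-1}^2-k_{\bar n}Y_{\bar n}Y_{\bar n+1}=k_{\bar n-1}X_{\bar n-1}^2+k_{\bar n}X_{\bar n}X_{\bar n+1}$, and the two agree precisely because $X_{\bar n-1}\equiv0$ --- which is exactly what the first step provided.

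Finally, for $(Z_n)$ (where I may assume $\bar n\ge2$, the case $\bar n=1$ being trivial with $Z=X$) I would put $m=n+\bar n-1$ and $s=t/k_{\bar n-1}$, so $Z_n(t)=X_m(s)$, and apply the chain rule: $\frac{d}{dt}Z_n(t)=k_{\bar n-1}^{-1}\big(k_{m-1}X_{m-1}(s)^2-k_mX_m(s)X_{m+1}(s)\big)$. Since $X_{m-1}(s)=Z_{n-1}(t)$ and $X_{m+1}(s)=Z_{n+1}(t)$ (with the convention $Z_0\equiv0$, consistent with $X_{\bar n-1}\equiv0$), it only remains to observe that $k_m/k_{\bar n-1}=2^{\beta n}=k_n$ and $k_{m-1}/k_{\bar n-1}=2^{\beta(n-1)}=k_{n-1}$ for $n\ge2$ (for $n=1$ this term is $0$ on both sides anyway), so $(Z_n)$ solves~\eqref{e:dyadic_unviscous} with initial datum $x_{n+\bar n-1}$. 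There is no genuine obstacle in this proof; the only things to watch are the index bookkeeping in the $Z$-transformation, the degenerate coefficient $k_0=0$ at the boundary of the chain, and remembering to feed the vanishing of $X_{\bar n-1}$ into the verification for $Y$.
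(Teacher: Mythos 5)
Your proof is correct and is exactly the direct verification the paper has in mind when it declares the proposition ``immediate'': establishing $X_n\equiv0$ for $n<\bar n$ (and $X_{\bar n}\neq0$) via the variation-of-constants formula and then checking each of $Y$, $Z$, $W$ by substitution, with the right attention to the boundary cases $n=\bar n$, $n=1$ and the convention $k_0=0$. No gaps; the only cosmetic remark is that for $\bar n=1$ the formula defining $Z$ degenerates ($k_0=0$), and your reading of that case as the identity transformation is the natural one.
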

%\begin{rem}
%By using \eqref{eq_cambio_sca_1} we can restrict ourselves to initial
%conditions $(x)_{n\geq1}$ with $x_1>0$ without loss of generality.
%In a similar way, by transformation \eqref{eq_cambio_sca_2} we can
%suppose that the norm $||x||_{l^2}$ is arbitrarily small without loss
%of generality.
%\end{rem}

\begin{defi}
A Leray-Hopf solution is a finite energy solution such that
$\|X(t)\|$ is a non-increasing function of $t$.
\end{defi}
We also introduce the notation for the finite-size blocks energy: for
all $n\geq1$ let
\begin{equation}
E_n(t):=\sum_{i\leq n}X_i^2(t).
\end{equation}
A direct computation shows that 
\begin{equation}\label{eq:energy_least_comp_nonincr}
E_n'=-2k_nX_n^2X_{n+1}, 
\end{equation}
so we can
study the variations of energy by looking to the sign of components.

\begin{prp}\label{thm:positivity_mantained}
Let $X$ be a weak solution of \eqref{e:dyadic_unviscous} and
$t_0\geq0$. If $X_n(t_0)>0$ then $X_n(t)>0$ for all $t\geq t_0$. If
$X_n(t_0)\geq 0$ then $X_n(t)\geq 0$ for all $t\geq t_0$.
\end{prp}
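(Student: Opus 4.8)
The plan is to analyze the scalar ODE $\dot X_n = k_{n-1}X_{n-1}^2 - k_n X_n X_{n+1}$ for a fixed index $n$, treating $X_{n-1}$ and $X_{n+1}$ as given continuous (indeed differentiable) functions of $t$. The key observation is that the ``source'' term $k_{n-1}X_{n-1}^2$ is always nonnegative (it is a square times $k_{n-1}\geq 0$), while the ``sink'' term $-k_n X_n X_{n+1}$ is linear in $X_n$. So along any solution we have the differential inequality $\dot X_n \geq -k_n X_{n+1}(t)\, X_n$, which is a linear comparison: writing $g(t):=k_n X_{n+1}(t)$, which is a fixed continuous function once the solution is fixed, we get $\dot X_n + g(t) X_n \geq 0$.

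First I would introduce the integrating factor $\mu(t):=\exp\bigl(\int_{t_0}^t g(s)\,ds\bigr)$, which is well-defined, continuous and strictly positive on $[t_0,\infty)$ because $g$ is continuous there. Then $\frac{d}{dt}\bigl(\mu(t)X_n(t)\bigr) = \mu(t)\bigl(\dot X_n(t)+g(t)X_n(t)\bigr)\geq 0$, so $\mu(t)X_n(t)$ is non-decreasing on $[t_0,\infty)$. Hence for $t\geq t_0$ we obtain $\mu(t)X_n(t)\geq \mu(t_0)X_n(t_0)=X_n(t_0)$, and since $\mu(t)>0$ this gives $X_n(t)\geq \mu(t)^{-1}X_n(t_0)$. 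If $X_n(t_0)\geq 0$ this immediately yields $X_n(t)\geq 0$ for all $t\geq t_0$; if $X_n(t_0)>0$ it yields $X_n(t)\geq \mu(t)^{-1}X_n(t_0)>0$, proving both assertions at once.

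The only point requiring a little care — and the part I would present most explicitly — is the justification that $\mu$ is genuinely finite on every bounded subinterval of $[t_0,\infty)$, i.e.\ that $\int_{t_0}^t k_n X_{n+1}(s)\,ds<\infty$ for each finite $t$. This is immediate from the definition of a weak solution: each $X_{n+1}$ is differentiable, hence continuous, hence bounded on the compact interval $[t_0,t]$, so the integral is finite and $\mu(t)$ is a well-defined positive number. No global bound on the solution is needed, and in particular the argument does not require the solution to have finite energy. This is the observation behind equation~\eqref{eq:energy_least_comp_nonincr} as well: once $X_{n+1}\geq 0$ is known for all later times, $E_n$ is non-increasing, but for the proposition itself we only need the elementary linear-ODE comparison just described.
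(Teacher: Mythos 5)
Your proposal is correct and is essentially the paper's own argument: the paper applies the variation of constants formula with the same integrating factor $e^{\int_{t_0}^t k_n X_{n+1}(s)\,ds}$, keeping the nonnegative source term $k_{n-1}X_{n-1}^2$ explicitly, while you fold it into a differential inequality — the two steps are interchangeable. The justification you highlight (finiteness of the exponent via continuity of $X_{n+1}$ on compact intervals) is the same implicit ingredient in the paper's one-line proof.
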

\begin{proof}
Simply apply the variation of constants formula
to~\eqref{e:dyadic_unviscous}
\[
X_n(t)=
X_n(t_0)e^{-\int_{t_0}^t k_n X_{n+1}(s) ds}
+ \int_{t_0}^t k_{n-1}X_{n-1}^2(s)e^{-\int_{s}^t k_n X_{n+1}(\tau) d\tau} ds.
\] 
The thesis follows.
\end{proof}
\begin{prp}
If the initial condition of system~\eqref{e:dyadic_unviscous} has
infinitely-many non-negative components, then every solution is
Leray-Hopf.
\end{prp}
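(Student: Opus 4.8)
The plan is to squeeze the energy of an arbitrary initial segment between the energies of two neighbouring ``truncations'' that are guaranteed to be monotone, using Proposition~\ref{thm:positivity_mantained} and the identity~\eqref{eq:energy_least_comp_nonincr}. By hypothesis there is a strictly increasing sequence of indices $(n_j)_{j\geq1}$ with $x_{n_j}\geq0$ for every $j$, and Proposition~\ref{thm:positivity_mantained} upgrades this to $X_{n_j}(t)\geq0$ for all $t\geq0$ and all $j$. Consequently, by~\eqref{eq:energy_least_comp_nonincr}, $E_{n_j-1}'=-2k_{n_j-1}X_{n_j-1}^2X_{n_j}\leq0$, so each $E_{n_j-1}$ is a non-increasing function on $[0,\infty)$.

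First I would check that the solution actually has finite energy. Fix $m\geq1$ and pick $j$ with $n_j-1\geq m$; then for all $t\geq0$
\[
E_m(t)\leq E_{n_j-1}(t)\leq E_{n_j-1}(0)\leq\|x\|^2 .
\]
Taking the supremum over $m$ gives $\sum_iX_i^2(t)=\sup_mE_m(t)\leq\|x\|^2<\infty$, so $X(t)\in H$ for every $t$ and $X$ is a finite energy solution. In particular, for each fixed $t$ the partial sums satisfy $E_m(t)\uparrow\|X(t)\|^2$ as $m\to\infty$, which is what lets us pass to the limit in the next step.

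To finish, take $0\leq s\leq t$. For each $j$ the monotonicity of $E_{n_j-1}$ gives $E_{n_j-1}(t)\leq E_{n_j-1}(s)$, and since $n_j\to\infty$, letting $j\to\infty$ and using $E_m(\cdot)\uparrow\|X(\cdot)\|^2$ yields $\|X(t)\|^2\leq\|X(s)\|^2$. Hence $\|X(t)\|$ is non-increasing and $X$ is Leray-Hopf. The argument is elementary; the only thing to be careful about is the order of operations — one cannot talk about $\|X(t)\|$ before the finite-energy conclusion is in hand — so the finite-energy step must be carried out before the monotonicity step, both of them resting on the single fact that $X_{n_j}$ never changes sign.
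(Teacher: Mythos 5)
Your argument is correct and follows essentially the same route as the paper: positivity of $X_{n_j}$ for all times (Proposition~\ref{thm:positivity_mantained}) makes each $E_{n_j-1}$ non-increasing, and passing to the limit along $j$ gives monotonicity of $\|X(t)\|^2$. Your explicit preliminary check that the energy is finite is a welcome bit of extra care (the paper leaves it implicit in the pointwise convergence $E_{n_i-1}\uparrow\|X\|^2$), but it does not change the substance of the proof.
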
\begin{proof}
Let $(n_i)_{i\geq1}$ be an increasing sequence such that $x_{n_i}\geq0$.
By Proposition~\ref{thm:positivity_mantained}, $X_{n_i}(t)\geq0$ for all
$t\geq0$, so that for all $i$, $E_{n_i-1}$ is a non-increasing
function. Since $E_{n_i-1}\uparrow\|X\|^2$ pointwise as
$i\rightarrow\infty$, $\|X \|^2$ is also non-increasing.
\end{proof}
We conclude this section by introducing the concept of a positive
solution.
\begin{defi}
We denote by $H^+$ the set of points in $H$ with all positive
components,
\[
H^+=\{x\in H:x_i>0\text{ for all }i\geq1\}.
\]
A positive solution is any solution such that $X(t)\in H^+$ for all
$t\geq0$. Of course positive solutions are always Leray-Hopf.
\end{defi}
It is a consequence of Proposition~\ref{thm:positivity_mantained} that
$H^+$ is closed for the dynamics, and actually a slighty stronger
result was proved in~\cite{BarFlaMor2011TAMS}: it is enough for the
initial condition to have a positive first component and all the other
components non-negative, to prove that $X(t)\in H^+$ for all $t>0$.

Positive solutions will turn out to have interesting regularizing
properties, in the next section we will prove that under very general
hypothesis, all solutions become positive after a finite time.

\section{Negative components}\label{s:negative}
The following statement shows that under very general hypothesis, all
solutions become positive after a finite time.

Only for this section we will weaken the hypothesis that the initial
condition belongs to $H$.

\begin{thm}\label{thm:all_positive_in_finite_time}
Let $x\in\R^{\N_+}$ be any initial condition with $x_1>0$ and let $X$ be a
weak solution. We suppose that one of the following hypothesis hold:
\begin{enumerate}
\item $x\in l^\infty$; there is an increasing sequence of indices
  $(n_i)_{i\geq1}$ such that $x_{n_i}\geq0$ for all $i\geq1$; for
  some $\delta\in(0,1)$, some $j_0\in\N$ and for all $j\geq j_0$, $n_{j+1}\leq
  k_{n_j}^{2/3\delta}$.
\item $x\in H$ and $X$ is a Leray-Hopf solution.
\end{enumerate}
Then there exists $\tau>0$ such that for all $t\geq \tau$ we have
$X(t)\in H^+$.
\end{thm}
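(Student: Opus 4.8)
The plan is to show that every component becomes positive after a finite time and then invoke Proposition~\ref{thm:positivity_mantained} to conclude it stays positive. The key observation is that since $X_1(0)=x_1>0$, Proposition~\ref{thm:positivity_mantained} already gives $X_1(t)>0$ for all $t\geq0$; moreover from~\eqref{e:dyadic_unviscous} the first equation reads $\dot X_1 = -k_1 X_1 X_2$, so on any interval where $X_2$ has a sign, $X_1$ is monotone. The strategy is an induction on $n$: suppose we have shown $X_j(t)>0$ for all $j\leq n$ and all $t\geq\tau_n$; we want to produce $\tau_{n+1}\geq\tau_n$ after which $X_{n+1}>0$ as well, with control on how fast $\tau_n$ grows (so that, together with the hypothesis relating $n_{j+1}$ to $k_{n_j}$, the whole sequence $\tau_n$ stays bounded, or at least we can reach each $n_i$ in bounded time). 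The mechanism that forces $X_{n+1}$ up is the forcing term $k_n X_n^2$ in the equation $\dot X_{n+1} = k_n X_n^2 - k_{n+1} X_{n+1} X_{n+2}$: once $X_n$ is bounded below by a positive constant, this term is bounded below, and as long as $X_{n+1}\leq0$ the damping term $-k_{n+1}X_{n+1}X_{n+2}$ has the "good" sign or is at least controlled, so $X_{n+1}$ must increase past $0$ in finite time.

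First I would establish uniform lower bounds: from the variation-of-constants formula in Proposition~\ref{thm:positivity_mantained}, once $X_n(\tau_n)>0$ one gets $X_n(t)\geq X_n(\tau_n)e^{-\int_{\tau_n}^t k_n X_{n+1}}$, and using boundedness of the solution ($x\in l^\infty$ in case 1, and $\|X(t)\|\leq\|x\|$ in case 2) one controls the exponent, so $X_n$ stays bounded below by a fixed positive constant $c_n$ on, say, a unit-length time interval after $\tau_n$. Then I would integrate the $(n+1)$-st equation: as long as $X_{n+1}(s)\leq 0$, the term $-k_{n+1}X_{n+1}X_{n+2}$ is either $\geq0$ (if $X_{n+2}\geq0$) or, if $X_{n+2}<0$, bounded below using the uniform bound $|X_{n+2}|\leq M$ and $|X_{n+1}|\leq M$; more carefully one writes $\dot X_{n+1}\geq k_n c_n^2 - k_{n+1}M|X_{n+1}|$ which, combined with $X_{n+1}\leq0$, shows $X_{n+1}$ cannot stay below $-k_nc_n^2/(2k_{n+1}M)$ and then is pushed up to $0$ within a time $\lesssim M/(k_{n+1}c_n^2)\cdot$something explicit. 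Quantifying this, I would get $\tau_{n+1}-\tau_n \leq \Phi(n)$ for an explicit $\Phi$, and then the hypothesis on $(n_i)$ — that $n_{j+1}\leq k_{n_j}^{2/3\delta}$, i.e.\ the gaps grow only polynomially in $k_{n_j}$ — is exactly what lets the total time $\sum\Phi$ over indices between consecutive $n_i$'s stay summable, so that after some finite $\tau$ all $X_{n_i}$ are positive; and between the $n_i$'s, the energy-monotonicity $E_{n_i-1}'=-2k_{n_i}X_{n_i}^2X_{n_i+1}$ (nonincreasing since $X_{n_i}\geq0$) plus positivity of $X_{n_i}$ forces the intermediate components up as well, pushing everything into $H^+$.

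For case~2 I would instead use that Leray-Hopf gives $\|X(t)\|^2$ nonincreasing, hence $\int_0^\infty k_n X_n^2 X_{n+1}\,ds <\infty$ for every $n$ (summing~\eqref{eq:energy_least_comp_nonincr} in a suitable way, or from $E_n(t)\geq0$), which forces $X_n^2 X_{n+1}\to0$ along a sequence; combined with the induction this again yields positivity of each component after finite time, and the summability is automatic because in the $l^2$ setting the constants $c_n$ and the relevant time increments can be bounded via $\|x\|$ alone without needing a sparse sign sequence. The main obstacle I anticipate is the bookkeeping in the induction: controlling $\tau_{n+1}-\tau_n$ sharply enough — in particular keeping track of how the lower bound $c_n$ degrades with $n$ (it shrinks because the exponential damping factor involves $k_n$, which grows) and checking that the resulting increments are still summable under the stated gap condition $n_{j+1}\leq k_{n_j}^{2/3\delta}$. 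Getting the exponent $2/3\delta$ to be exactly what the estimate tolerates is the delicate point; everything else is a fairly mechanical comparison-principle argument built on Proposition~\ref{thm:positivity_mantained}.
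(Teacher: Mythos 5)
There is a genuine gap, and it sits exactly at the point you yourself flag as ``the delicate point''. Your induction propagates a pointwise lower bound $c_n$ for $X_n$ obtained from the variation-of-constants formula, $X_n(t)\geq X_n(\tau_n)e^{-\int_{\tau_n}^t k_nX_{n+1}}$, and then estimates the time to push $X_{n+1}$ above zero by something of order $k_{n+1}^{-1}c_n^{-2}$ (up to constants involving the sup bound $M$). But this scheme cannot be closed: at the time $\tau_n$ when $X_n$ first becomes nonnegative your induction gives no quantitative lower bound at all ($X_n(\tau_n)$ may be $0$ or arbitrarily small), and even granting a positive value, the exponential damping factor degrades $c_n$ at a rate involving $k_n=2^{\beta n}$, so the increments $\tau_{n+1}-\tau_n\sim k_{n+1}^{-1}c_n^{-2}$ are nowhere near summable. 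No choice of the exponent in the gap condition $n_{j+1}\leq k_{n_j}^{2/3\delta}$ can repair a bound that deteriorates exponentially in $k_n$; the gap condition in the paper is used for something else entirely, namely to bound $\eta_n:=E_{\omega_n}(0)\leq\omega_n\|x\|_{l^\infty}^2\leq k_n^{2/3\delta}\|x\|_{l^\infty}^2$, where $\omega_n$ is the next index with a nonnegative initial component, and this bound (together with $X_n^2(t)\leq\eta_n$) is what makes the waiting times summable.

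The missing idea is that the lower bound on $X_n$ must be \emph{uniform in $n$}, and the paper gets it from an energy argument rather than from propagating positivity. While $X_{n+1}<0$, identity~\eqref{eq:energy_least_comp_nonincr} gives $E_n'=-2k_nX_n^2X_{n+1}\geq0$, so $E_n(t)\geq E_n(0)\geq x_1^2$: the initial energy cannot pass level $n$. A separate ``energy transfer'' lemma (Lemma~\ref{lem:positivity_2nd_lemma}) shows that once $X_k,X_{k+1}\geq0$ the block energy satisfies $E_k(t+\varepsilon^{-2}s_k)-E_{k-1}(t)\leq\varepsilon a_k$ with summable $(a_k),(s_k)$, so after the scheduled (summable) times $E_{n-1}\leq\varepsilon\sum_k a_k$. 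Subtracting, $X_n^2=E_n-E_{n-1}\geq x_1^2-\varepsilon\sum_k a_k=\gamma^2$ with $\gamma$ independent of $n$; only then does the comparison argument you describe (Lemma~\ref{lem:positivity_from_n_to_n+1}, applied with the fixed constant $a=\gamma$) give $X_{n+1}\geq0$ after an additional time $v_n(\gamma)$ whose sum converges thanks to the $\eta_n$ bound above. Your treatment of case~2 (``$\int k_nX_n^2X_{n+1}<\infty$ forces $X_n^2X_{n+1}\to0$ along a sequence'') is likewise too vague to yield positivity of each component at a definite finite time, whereas in the paper case~2 simply corresponds to $\eta_n\equiv\|x\|^2$ in the same two-lemma scheme. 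So the overall shape (induction over components, comparison ODE to lift $X_{n+1}$, summable time increments) matches the paper, but without the uniform energy-based lower bound the proof as proposed does not go through.
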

We will need the Lemmas~\ref{lem:positivity_from_n_to_n+1}
and~\ref{lem:positivity_2nd_lemma} below. Both of them use the
following quantity,
\[
\eta_n:
=\sum_{k\leq\omega_n}x_k^2
=E_{\omega_n}(0),
\]
where $\omega_n=\inf\{i\geq n:x_{i+1}\geq0\}$. This quantity is useful
to bound $X_n$ and can be easily bounded itself, as we show presently.

Depending on which one of the two hypothesis of the theorem holds, it
may be either that $\omega_n<\infty$ and $x_{\omega_n+1}\geq0$, or
$\omega_n=\infty$ and $\eta_n=\|x\|^2$. By
applying~\eqref{eq:energy_least_comp_nonincr} or the Leray-Hopf
property, in both cases, for all $n\geq1$ and $t\geq0$,
\begin{equation}
\label{eq:X_n_bounded_by_eta}
X_n^2(t)\leq E_{\omega_n}(t)\leq\eta_n.
\end{equation}
In the case of Leray-Hopf solutions, $\eta_n=\|x\|^2$ for all $n$. In
the case of $l^\infty$ initial condition,
$\eta_n\leq\omega_n\|x\|_{l^\infty}^2$ and moreover, if $i$ is such
that $n_i\leq n<n_{i+1}$, then $\omega_n=n_{i+1}-1$, so
\begin{equation}\label{eq:omega_bounded_by_k_n}
\frac{\eta_n}{\|x\|_{l^\infty}^2}
\leq\omega_n
\leq k_{n_i}^{2/3\delta}
\leq k_n^{2/3\delta},
\end{equation}
definitively.
\begin{lem}\label{lem:positivity_from_n_to_n+1}
In the same hypothesis of
Theorem~\ref{thm:all_positive_in_finite_time}, if $X_n(t)\geq a>0$,
then $X_{n+1}(t+v_n(a))\geq0$, where
\[
v_n(a)=\frac{2^{2\beta}\eta_n^2+a^4}{k_{n+1}a^4\sqrt{\eta_n}}.
\]
Moreover, for all $a>0$
\begin{equation}\label{eq:v_n_summable}
\sum_{n=1}^\infty v_n(a)<\infty.
\end{equation}
\end{lem}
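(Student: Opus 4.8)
The aim is to show that if $X_n(t)\geq a>0$ then $X_{n+1}$ becomes non-negative at time $t+v_n(a)$, with an explicit $v_n(a)$, and that $\sum_n v_n(a)<\infty$. I would argue by contradiction: suppose $X_{n+1}(s)<0$ on the whole interval $s\in[t,t+v_n(a)]$. On this interval the equation for $X_n$ reads $\dot X_n = k_{n-1}X_{n-1}^2 - k_n X_n X_{n+1}$; since $X_{n+1}<0$ and $X_n(t)\geq a>0$, Proposition~\ref{thm:positivity_mantained} keeps $X_n(s)>0$, so the term $-k_nX_nX_{n+1}$ is $\geq 0$ and also $k_{n-1}X_{n-1}^2\geq0$, hence $X_n$ is non-decreasing and $X_n(s)\geq a$ throughout. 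Then look at the $X_{n+1}$ equation: $\dot X_{n+1} = k_n X_n^2 - k_{n+1}X_{n+1}X_{n+2} \geq k_n X_n^2 - k_{n+1}X_{n+1}X_{n+2}$. Here I need to control the loss term $-k_{n+1}X_{n+1}X_{n+2}$, which is problematic because $X_{n+1}<0$ makes its sign depend on $X_{n+2}$.

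To handle the loss term I would use the variation-of-constants representation for $X_{n+1}$ exactly as in the proof of Proposition~\ref{thm:positivity_mantained}:
\[
X_{n+1}(s) = X_{n+1}(t)e^{-\int_t^s k_{n+1}X_{n+2}(\tau)d\tau} + \int_t^s k_n X_n^2(r)\,e^{-\int_r^s k_{n+1}X_{n+2}(\tau)d\tau}\,dr.
\]
The first term has the sign of $X_{n+1}(t)$, and by~\eqref{eq:X_n_bounded_by_eta} we have $|X_{n+1}(t)|\leq\sqrt{\eta_{n+1}}\leq\sqrt{\eta_n}$ (monotonicity of $\eta$ in $n$, since $\omega_{n+1}\geq\omega_n$). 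For the integral term I want a \emph{lower} bound; the exponential weights are positive, and on $[t,s]$ we have $k_nX_n^2(r)\geq k_n a^2$. The subtlety is bounding $\int_r^s k_{n+1}X_{n+2}(\tau)\,d\tau$ from below so the exponential is not too small: here I would invoke the energy bound $|X_{n+2}|\leq\sqrt{\eta_{n+2}}\leq\sqrt{\eta_n}$ to get $\int_r^s k_{n+1}X_{n+2}\,d\tau \leq k_{n+1}\sqrt{\eta_n}\,(s-r)\leq k_{n+1}\sqrt{\eta_n}\,v_n(a)$, so the exponential is at least $e^{-k_{n+1}\sqrt{\eta_n}\,v_n(a)}$. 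Choosing $v_n(a)$ so that this exponent is $O(1)$ — which is exactly what the factor $\sqrt{\eta_n}$ in the denominator of $v_n(a)$ arranges — makes the production term dominate: at $s=t+v_n(a)$ the integral term is at least $\mathrm{const}\cdot k_n a^2 v_n(a)$, which exceeds $\sqrt{\eta_n}$ precisely when $v_n(a)\geq \mathrm{const}\cdot\sqrt{\eta_n}/(k_n a^2)$, and then $X_{n+1}(t+v_n(a))\geq0$, a contradiction. Matching constants with the stated $v_n(a)=(2^{2\beta}\eta_n^2+a^4)/(k_{n+1}a^4\sqrt{\eta_n})$ (note $k_n/k_{n+1}=2^{-\beta}$, which is where the $2^{2\beta}$ enters) is then a bookkeeping exercise.

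For the summability~\eqref{eq:v_n_summable}, I would split $v_n(a) = 2^{2\beta}\eta_n^{3/2}/(k_{n+1}a^4) + 1/(k_{n+1}\sqrt{\eta_n})$. In the Leray-Hopf case $\eta_n=\|x\|^2$ is constant, so both terms are $O(k_{n+1}^{-1}) = O(2^{-\beta n})$, which is summable since $\beta>0$. In the $l^\infty$ case, by~\eqref{eq:omega_bounded_by_k_n} we have $\eta_n\leq \mathrm{const}\cdot k_n^{2/3\delta}$, hence $\eta_n^{3/2}\leq \mathrm{const}\cdot k_n^{1/\delta}$ and the first term is $\leq \mathrm{const}\cdot k_n^{1/\delta}/k_{n+1} = \mathrm{const}\cdot 2^{-\beta(1-1/\delta)\cdot n}\cdot 2^{-\beta}$ — wait, since $\delta\in(0,1)$ this has $1/\delta>1$ and is \emph{not} obviously small; I must use instead that $\eta_n\leq k_n^{2/3\delta}$ gives $\eta_n^{3/2}/k_{n+1}\leq k_n^{1/\delta}/k_{n+1}$, which fails. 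The correct reading is that the hypothesis $n_{j+1}\leq k_{n_j}^{2/3\delta}$ with $\delta<1$ makes $\eta_n\leq\omega_n\|x\|_\infty^2$ \emph{sub}-geometric relative to $k_n^{2/3}$, so $\eta_n^{3/2}=o(k_n)$ — more precisely $\eta_n^{3/2}\leq \mathrm{const}\cdot k_n^{1/\delta}$ with the true bound being $k_n^{\delta\cdot(1/\delta)}$... I would need to recheck: the exponent $2/3\delta$ presumably means $2\delta/3$, so $\eta_n\leq k_n^{2\delta/3}$, giving $\eta_n^{3/2}\leq k_n^\delta$ and $v_n(a)\leq\mathrm{const}\cdot k_n^{\delta-1}+\mathrm{const}\cdot k_n^{-1-\delta/3}$, both summable since $\delta<1$. \textbf{The main obstacle} is this second term in the contradiction argument — getting a clean lower bound on the exponential $e^{-\int_r^s k_{n+1}X_{n+2}}$ without circularity — together with correctly parsing the exponent in the $l^\infty$ hypothesis so that the first summability estimate actually closes; the rest is the variation-of-constants trick already used above plus careful constant-chasing.
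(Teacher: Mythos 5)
There is a genuine gap at the heart of your argument: the variation-of-constants step cannot by itself lift $X_{n+1}$ from a large negative value to $0$, no matter how long you wait. With only the pointwise bound $X_{n+2}\geq-\sqrt{\eta_n}$, your comparison reads $X_{n+1}'\geq k_na^2+k_{n+1}\sqrt{\eta_n}\,X_{n+1}$ (for $X_{n+1}\leq0$), an ODE whose equilibrium $-k_na^2/(k_{n+1}\sqrt{\eta_n})=-2^{-\beta}a^2/\sqrt{\eta_n}$ is \emph{unstable}: if $X_{n+1}(t)$ lies below it, the comparison solution decreases, and you conclude nothing. Equivalently, in your variation-of-constants formula the best lower bound you can extract is
\[
X_{n+1}(s)\;\geq\;e^{-\int_t^s k_{n+1}X_{n+2}}\Bigl[\,X_{n+1}(t)+\tfrac{k_na^2}{k_{n+1}\sqrt{\eta_n}}\bigl(1-e^{-k_{n+1}\sqrt{\eta_n}(s-t)}\bigr)\Bigr],
\]
and the bracket saturates at $X_{n+1}(t)+2^{-\beta}a^2/\sqrt{\eta_n}$, which stays negative whenever $|X_{n+1}(t)|>2^{-\beta}a^2/\sqrt{\eta_n}$ --- a case that cannot be excluded, since a priori $|X_{n+1}(t)|$ may be as large as $\sqrt{\eta_n}$ while $a$ is fixed (in the application $a=\gamma$ is a constant and $\eta_n$ may even grow). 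Your claim that the $\sqrt{\eta_n}$ in the denominator of $v_n(a)$ makes the exponent $k_{n+1}\sqrt{\eta_n}\,v_n(a)$ of order one is incorrect: that product equals $2^{2\beta}\eta_n^2/a^4+1$, which is huge precisely in the relevant regime $a\ll\sqrt{\eta_n}$. (A smaller slip: $\omega_{n+1}\geq\omega_n$ gives $\eta_{n+1}\geq\eta_n$, not $\leq$; the bound $|X_{n+1}(t)|\leq\sqrt{\eta_n}$ holds instead because $X_{n+1}(t)<0$ forces $\omega_n\geq n+1$.)

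The missing idea, which is the core of the paper's proof, is an energy-budget argument covering exactly the regime your comparison cannot reach. Set $x:=\frac{k_na^2}{2k_{n+1}\sqrt{\eta_n}}$ (half the distance to the unstable equilibrium). While $X_{n+1}\leq-x$ one has, by \eqref{eq:energy_least_comp_nonincr}, $E_n'=-2k_nX_n^2X_{n+1}\geq2k_na^2x$, and since $E_n\leq E_{\omega_n}\leq\eta_n$ by \eqref{eq:X_n_bounded_by_eta}, this phase can last at most $\eta_n/(2k_na^2x)=2^{2\beta}\eta_n^{3/2}/(k_{n+1}a^4)$ --- this is the dominant first term of $v_n(a)$, which your accounting has no counterpart for. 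Only once $X_{n+1}\geq-x$ does the ODE comparison close: there the right-hand side is at least $k_na^2-k_{n+1}\sqrt{\eta_n}\,x=k_na^2/2$ (and a barrier argument keeps $X_{n+1}\geq-x$), so reaching $0$ takes at most $2x/(k_na^2)=1/(k_{n+1}\sqrt{\eta_n})$, the second term of $v_n(a)$. Your summability discussion, after correcting the exponent to $k_n^{2\delta/3}$, does match the paper's: $\eta_n^{3/2}/k_n\lesssim k_n^{\delta-1}$ is summable for $\delta<1$, and the Leray--Hopf case is immediate; but the first part of the lemma needs the two-phase argument above.
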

\begin{proof}
For all $x\in\R$, let $\tau_x:=\inf\{s\geq0:X_{n+1}(s)\geq x\}\in[0,\infty]$.  If
$X_{n+1}(t)\geq0$ there is nothing to prove, so we suppose $\tau_0>t$.

On $[t,\tau_0)$, $X_{n+1}\leq0$, so
  $X_n'=k_{n-1}X_{n-1}^2-k_nX_nX_{n+1}\geq0$ and hence $X_n\geq a$ on
  the same interval.

Let $x$ be any positive number (it will be fixed a few lines below). We
want to prove that
\begin{equation}\label{eq:bound_on_tau-x}
\tau_{-x}\leq t+\frac{\eta_n}{2k_na^2x}.
\end{equation}
If $X_{n+1}(t)>-x$, $\tau_{-x}\leq t$ and we are done. On the other
hand, if $\tau_{-x}>t$, on $[t,\tau_{-x})$, $X_{n+1}\leq -x$ and the
  energy of components from $n+1$ to $\omega_n$ decreases at least
  linearly, as we show presently.
  By~\eqref{eq:energy_least_comp_nonincr}, for all
  $s\in[t,\tau_{-x})$,
\[
E_n(s)=-\int_t^s2k_nX_n^2(u)X_{n+1}(u)du
\geq2(s-t)k_na^2x,
\]
so that by~\eqref{eq:X_n_bounded_by_eta}, we obtain
bound~\eqref{eq:bound_on_tau-x}:
\[
0\leq\sum_{i=n+1}^{\omega_n}X_i^2(s)
=E_{\omega_n}(s)-E_n(s)
\leq\eta_n-2k_na^2x(s-t).
\]

Now let $x=\frac{k_na^2}{2k_{n+1}\sqrt{\eta_n}}$. We claim that
with this choice of $x$, $X_{n+1}'\geq k_na^2/2$ on
$[\tau_{-x},\tau_0)$, yielding
\begin{equation}\label{eq:bound_on_tau0}
\tau_0-\tau_{-x}\leq\frac{2x}{k_na^2}
=\frac1{k_{n+1}\sqrt{\eta_n}}.
\end{equation}
We prove the claim,
\[
X_{n+1}'=k_nX_n^2-k_{n+1}X_{n+1}X_{n+2}
\geq k_na^2-k_{n+1}X_{n+1}X_{n+2}.
\]
Since $X_{n+1}\leq0$, if $x_{n+2}\geq0$ we conclude immediately that
$X_{n+1}'\geq k_na^2$. On the other hand, if $x_{n+2}<0$, then
$\omega_n\geq n+2$ and by~\eqref{eq:X_n_bounded_by_eta}
$X_{n+2}\geq-\sqrt{\eta_n}$ so that
\[
X_{n+1}'
\geq k_na^2+k_{n+1}\sqrt{\eta_n}X_{n+1}.
\]
Since $X_{n+1}(\tau_{-x})=-x$ and the RHS becomes negative only for
$X_{n+1}<-2x$, we see that $X_{n+1}\geq-x$ on
$[\tau_{-x},\tau_0)$ and finally
\[
X_{n+1}'
\geq k_na^2-k_{n+1}\sqrt{\eta_n}x
= k_na^2/2.
\]
Putting~\eqref{eq:bound_on_tau-x} and~\eqref{eq:bound_on_tau0}
together, we find $X_{n+1}(t+v_n(a))\geq0$ with
\[
v_n(a):=\frac{\eta_n}{2k_na^2x}+\frac1{k_{n+1}\sqrt{\eta_n}}
=\frac{2^{2\beta}\eta_n^2+a^4}{k_{n+1}a^4\sqrt{\eta_n}},
\]
so the first part of the statement is proved.

Finally, we turn to prove the convergence
of~\eqref{eq:v_n_summable}. This is obvious in the case of Leray-Hopf
solutions where $\eta_n$ does not depend on $n$. In the other case we
can reduce ourselves to prove
$\sum_{n=1}^\infty\frac{\eta_n^{3/2}}{k_n}<\infty$, which follows
from~\eqref{eq:omega_bounded_by_k_n}.
\end{proof}
\begin{lem}\label{lem:positivity_2nd_lemma}
In the same hypothesis of
Theorem~\ref{thm:all_positive_in_finite_time}, there exist two
summable sequences of positive numbers $(a_n)_{n\geq1}$,
$(s_n)_{n\geq1}$ depending only on $x$, such that for all $n\geq1$,
for all $t>0$ and for all $\varepsilon\in(0,1]$,
\begin{align}
\label{hyp:x_n_and_x_n+1_positive}
\text{if}\quad &X_n(t)\geq0,X_{n+1}(t)\geq0,\\
\label{eq:energy_transfer_lemma_7}
\text{then}\quad &E_n(t+\varepsilon^{-2}s_n)-E_{n-1}(t)\leq\varepsilon a_n.
\end{align}
\end{lem}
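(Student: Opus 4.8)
The plan is to reduce the claim to a one-sided pointwise bound on $X_n$ and then kill the ``bad event'' by a flux/energy-budget argument; the delicate point will be choosing $(a_n)$ and $(s_n)$ so that both lie in $\ell^1$.

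\emph{Step 1 (reduction).} The hypotheses $X_n(t)\ge0,\ X_{n+1}(t)\ge0$ propagate by Proposition~\ref{thm:positivity_mantained}: $X_n(s)\ge0$ and $X_{n+1}(s)\ge0$ for all $s\ge t$. Hence by~\eqref{eq:energy_least_comp_nonincr}, on $[t,\infty)$ both $E_n'=-2k_nX_n^2X_{n+1}\le0$ and $E_{n-1}'=-2k_{n-1}X_{n-1}^2X_n\le0$, so $E_n$ and $E_{n-1}$ are non-increasing there. Consequently, for every $t'\ge t$ and every $\sigma\in[t,t']$,
\[
E_n(t')-E_{n-1}(t)\ \le\ E_n(\sigma)-E_{n-1}(\sigma)\ =\ X_n^2(\sigma),
\]
so $E_n(t')-E_{n-1}(t)\le\inf_{\sigma\in[t,t']}X_n^2(\sigma)$ (equivalently $E_n(t')-E_{n-1}(t)=X_n^2(t)-2\int_t^{t'}k_nX_n^2X_{n+1}\,ds$, i.e.\ the excess energy $X_n^2(t)$ in block $n$ must be flushed out). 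It therefore suffices to produce summable $(a_n),(s_n)$ such that, under the hypotheses, $X_n$ attains a value $\le\sqrt{\varepsilon a_n}$ somewhere in $[t,\,t+\varepsilon^{-2}s_n]$.

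\emph{Step 2 (killing the bad event).} Set $c:=\sqrt{\varepsilon a_n}$ and $I:=[t,\,t+\varepsilon^{-2}s_n]$, and suppose for contradiction $X_n(\sigma)>c$ for every $\sigma\in I$. First, the a priori bound~\eqref{eq:X_n_bounded_by_eta} controls the outflow of block $n$: $2\int_I k_nX_n^2X_{n+1}\,ds=E_n(t)-E_n(t+\varepsilon^{-2}s_n)\le E_n(t)\le\eta_n$, and $X_n^2>c^2$ on $I$ then gives $\int_I X_{n+1}\,ds\le\eta_n/(2c^2k_n)$. Secondly, $\dot X_{n+1}=k_nX_n^2-k_{n+1}X_{n+1}X_{n+2}\ge k_nc^2-k_{n+1}\sqrt{\eta_{n+2}}\,X_{n+1}$ on $I$ (using $X_{n+2}\le\sqrt{\eta_{n+2}}$, again from~\eqref{eq:X_n_bounded_by_eta}), so $X_{n+1}$ climbs to the level $\theta_n:=k_nc^2/(2k_{n+1}\sqrt{\eta_{n+2}})$ within a transient of length $\le 1/(k_{n+1}\sqrt{\eta_{n+2}})$ and cannot fall back below it while $X_n>c$; hence $\int_I X_{n+1}\,ds\ge\theta_n\big(\varepsilon^{-2}s_n-1/(k_{n+1}\sqrt{\eta_{n+2}})\big)$. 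Since $c^4=\varepsilon^2a_n^2$ and $|I|=\varepsilon^{-2}s_n$, comparing the two estimates makes the powers of $\varepsilon$ cancel and yields a contradiction \emph{as soon as}
\[
s_n\ >\ \frac{\eta_n\,k_{n+1}\sqrt{\eta_{n+2}}}{k_n^2\,a_n^2}\ +\ \frac{1}{k_{n+1}\sqrt{\eta_{n+2}}}.
\]

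\emph{Step 3 (choice of sequences; the main obstacle).} Choose $(a_n)$ to be any summable positive sequence and set $s_n$ equal to, say, twice the right-hand side above; it remains to check $\sum_n s_n<\infty$. The last term is harmless, since $\eta_{n+2}\ge x_1^2>0$ and $k_{n+1}=2^{\beta(n+1)}$. In case~2 (Leray--Hopf) one has $\eta_n\equiv\|x\|^2$, so the first term is $\le C\|x\|^3/(k_na_n^2)$, which is summable (e.g.\ with $a_n=n^{-2}$, because $k_n^{-1}=2^{-\beta n}$ beats any polynomial). In case~1 one uses the growth hypothesis through~\eqref{eq:omega_bounded_by_k_n}, $\eta_n\lesssim k_n^{2/3\delta}$; here the estimate must be sharpened, because the crude bound $X_{n+2}\le\sqrt{\eta_{n+2}}$ by itself lets the first term grow with $n$, and one has to exploit that the whole \emph{finite} block $[n,\omega_n]$ is draining simultaneously --- running (or iterating) the Step~2 estimate also for the downstream components so as to replace $\sqrt{\eta_{n+2}}$ by a quantity that decays in $n$. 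I expect this last piece of bookkeeping --- reconciling the waiting time $\varepsilon^{-2}s_n$ with the target $\varepsilon a_n$ while keeping \emph{both} sequences in $\ell^1$ --- to be the only real difficulty; Steps~1--2 are routine once Proposition~\ref{thm:positivity_mantained} and~\eqref{eq:X_n_bounded_by_eta} are in hand.
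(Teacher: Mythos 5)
Your Steps 1--2 are correct and are in substance the paper's own argument: assume the conclusion fails, so that $X_n^2>\varepsilon a_n$ on the whole window of length $\varepsilon^{-2}s_n$; the equation for $X_{n+1}$ together with the a priori bound $X_{n+2}\le\sqrt{\eta_{n+2}}$ from~\eqref{eq:X_n_bounded_by_eta} forces $X_{n+1}$ above a plateau of height of order $k_n\varepsilon a_n/(k_{n+1}\sqrt{\eta_{n+2}})$ after a transient of length at most $1/(k_{n+1}\sqrt{\eta_{n+2}})$, and then the dissipated energy $2k_n\int X_n^2X_{n+1}$ of block $n$ exceeds its budget $\eta_n$; the powers of $\varepsilon$ cancel exactly as you say. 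The paper packages the same computation slightly differently (it fixes $a_n,s_n\propto k_n^{-(1-\delta)/3}$ in advance, plugs the exponential lower bound for $X_{n+1}$ directly into the flux identity and uses $e^{-x}-1+x\ge x\left(1+\tfrac2x\right)^{-1}$), but the mechanism is identical.

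The only genuine defect is in Step 3, where you leave Case 1 open, claiming that the crude bound $X_{n+2}\le\sqrt{\eta_{n+2}}$ makes the first term of $s_n$ grow with $n$ and that one must iterate the estimate on downstream components. That obstacle is not there. Hypothesis 1 gives, via~\eqref{eq:omega_bounded_by_k_n}, $\omega_n\le k_n^{2\delta/3}$ definitively (this is the reading of the exponent which the paper itself needs for the summability at the end of Lemma~\ref{lem:positivity_from_n_to_n+1}), hence $\eta_n\le\|x\|_{l^\infty}^2k_n^{2\delta/3}$ and $\sqrt{\eta_{n+2}}\le\|x\|_{l^\infty}k_{n+2}^{\delta/3}$, so that
\[
\frac{\eta_n\,k_{n+1}\sqrt{\eta_{n+2}}}{k_n^2a_n^2}
\;\le\; C(\beta,\delta)\,\|x\|_{l^\infty}^3\,\frac{k_n^{\delta-1}}{a_n^2},
\]
and since $\delta<1$ the factor $k_n^{\delta-1}=2^{-\beta(1-\delta)n}$ decays geometrically. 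Thus any polynomially decaying summable choice (even your $a_n=n^{-2}$) already yields $\sum_n s_n<\infty$, and the paper's own choice $a_n=Ck_n^{-(1-\delta)/3}$ gives $s_n\lesssim k_n^{(\delta-1)/3}+x_1^{-1}k_{n+1}^{-1}$, summable as well (use $\eta_{n+2}\ge x_1^2$ for the second term). With this one-line computation your proof is complete, and no sharpening or iteration on the block $[n,\omega_n]$ is needed.
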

\begin{proof}
We follow the lines of the first part of Lemma~7
in~\cite{BarFlaMor2011TAMS}, weakening the hypothesis of positivity.

For $n\geq1$, let $s_n=\frac{2^{1+\beta}}{x_1}k_n^{-(1-\delta)/3}$, $a_n=Ck_n^{-(1-\delta)/3}$ and
\[
b_{n,\varepsilon}=2a_n^2s_nk_{n-1}\left(1+\frac2{k_{n+1}\sqrt{\eta_{n+2}}\varepsilon^{-2}s_n}\right)^{-1}.
\]
By using $\varepsilon\leq1$, $\eta_{n+2}\geq x_1^2$ and after some computations, one shows
that
\begin{multline*}
b_{n,\varepsilon}
\geq k_{n+2}^\delta\left(1+\frac2{k_{n+1}x_1s_n}\right)^{-1}K'C^2
= k_{n+2}^\delta\left(1+k_n^{-2/3-\delta/3}\right)^{-1}K'C^2\\
\geq k_{n+2}^\delta\left(1+k_1^{-2/3}\right)^{-1}K'C^2
\geq k_{n+2}^\delta KC^2
\geq KC^2,
\end{multline*}
where $K>0$ does not depend on $n$ or $\varepsilon$.

Thanks to this inequality, by a suitable choice of $C$, we can impose
that $b_{n,\varepsilon}\geq \eta_{n+2}^{3/2}$, for all $n\geq1$ and
$\varepsilon\in(0,1]$.

In fact, in the case of Leray-Hopf solutions we choose $C$ such that
$KC^2\geq\|x\|^3$, so for all $n\geq1$,  $\inf_{\varepsilon}
b_{n,\varepsilon}\geq\|x\|^3=\eta_{n+2}^{3/2}$. In the case of $l^\infty$
initial condition we choose $C$ such that
$KC^2\geq\|x\|_{l^\infty}^3$, so for all $n\geq1$,
$\inf_{\varepsilon}b_{n,\varepsilon}\geq\|x\|_{l^\infty}^3k_{n+2}^\delta$ and then
we apply~\eqref{eq:omega_bounded_by_k_n}.

Let $h=\varepsilon^{-2}s_n$ and fix $n$ and
$t$. By~\eqref{hyp:x_n_and_x_n+1_positive}, $E_{n-1}$ and $E_n$ are
both nonincreasing in $[t,t+h]$, so
\[
E_n(t+\varepsilon^{-2}s_n)-E_{n-1}(t)
\leq E_n(t+s)-E_{n-1}(t+s)
=X_n^2(t+s),
\]
for all $s\in[0,h]$. 

Now we proceed by contradiction. Suppose
that~\eqref{eq:energy_transfer_lemma_7} does not hold. Then the above
inequality implies that $X_n^2>\varepsilon a_n$ on $[t,t+h]$, yielding
\begin{multline}\label{eq:passaggi_secondo_lemma_positivita}
E_n(t+\varepsilon^{-2}s_n)-E_{n-1}(t)
=\int_0^hE_n'(t+s)ds+X_n^2(t)\\
=-\int_0^h2k_nX_n^2(t+s)X_{n+1}(t+s)ds+X_n^2(t)\\
\leq -2\varepsilon a_nk_n\int_0^hX_{n+1}(t+s)ds+X_n^2(t).
\end{multline}
We turn our attention to $X_{n+1}$. On the interval $[t,t+h]$ we have
\[
X_{n+1}'=k_nX_n^2-k_{n+1}X_{n+1}X_{n+2}
\geq \varepsilon a_nk_n-k_{n+1}\sqrt{\eta_{n+2}}X_{n+1}.
\]
We get
\[
X_{n+1}(t+s)\geq\varepsilon a_nk_n\int_0^se^{k_{n+1}\sqrt{\eta_{n+2}}(\tau-s)}d\tau.
\]
If this integral is computed and substituted
into~\eqref{eq:passaggi_secondo_lemma_positivita}, using the
inequality $e^{-x}-1+x\geq x\left(1+\frac2x\right)^{-1}$ one gets
\begin{multline*}
E_n(t+\varepsilon^{-2}s_n)-E_{n-1}(t)
\leq X_n^2(t)-\frac{2\varepsilon^2 a_n^2k_n^2}{k_{n+1}\sqrt{\eta_{n+2}}}\frac h{1+\frac2{k_{n+1}\sqrt{\eta_{n+2}}h}}\\
=X_n^2(t)-\frac{2a_n^2s_nk_{n-1}{\eta_{n+2}}^{-1/2}}{1+\frac2{k_{n+1}\sqrt{\eta_{n+2}}\varepsilon^{-2}s_n}}
=X_n^2(t)-b_{n,\varepsilon}{\eta_{n+2}}^{-1/2}
\leq X_n^2(t)-{\eta_{n+2}}\leq0.
\end{multline*}
Since we were pretending~\eqref{eq:energy_transfer_lemma_7} would not
hold, this is a contradiction.
\end{proof}

\begin{proof}[Proof of Theorem~\ref{thm:all_positive_in_finite_time}]
Let $(a_n)_{n\geq1}$ and
$(s_n)_{n\geq1}$ be as in Lemma~\ref{lem:positivity_2nd_lemma} and let
\[
\gamma:=\sqrt{x_1^2-\varepsilon\sum_{i=1}^\infty a_i},
\]
with $\varepsilon>0$ small enough to make the radicand
positive.
Let $t_0=0$. By Lemma~\ref{lem:positivity_from_n_to_n+1} there exist a
time $t_1$ such that $X_2(t_1)\geq0$. For $n\geq1$ let
\[
t_{n+1}=t_n+\varepsilon^{-2}s_n+v_{n+1}(\gamma),
\]
where $v_n$ is given again by Lemma~\ref{lem:positivity_from_n_to_n+1}.
Notice that $\sum_{k=1}^\infty s_k$ and $\sum_{k=1}^\infty
v_k(\gamma)$ converge by the two lemmas, hence $t_k\uparrow
\sup_kt_k=:\tau<\infty$.

We will show by induction that $X_n(t_{n-1})\geq0$ for all $n\geq1$,
and this will prove the positivity of all components at time $\tau$.

The cases $n=1,2$ are already done. Now, let $n\geq2$, we aim to prove
that $X_{n+1}(t_n)\geq0$. By inductive hypothesis, for $k=1,2,\dots,n$
we know $X_k(t_{k-1})\geq0$, meaning in particular that $X_k(t)\geq0$
and for all $t\geq t_{n-1}$. By Lemma~\ref{lem:positivity_2nd_lemma}
applied with $t=t_k$,
\[
E_k(t_k+\varepsilon^{-2}s_k)-E_{k-1}(t_k)\leq\varepsilon a_k,
\qquad k=1,2,\dots,n-1.
\]
Using the fact that when $X_{k+1}\geq0$, $E_k$ is nonincreasing we get
\[
E_k(t_{k+1})-E_{k-1}(t_k)
\leq\varepsilon a_k,
\qquad k=1,2,\dots,n-1.
\]
By summing over $k$ the latter or the former inequalities, we obtain
\begin{equation}\label{eq:En_unif_bdd}
E_{n-1}(t_{n-1}+\varepsilon^{-2}s_{n-1})\leq\sum_{k=1}^\infty\varepsilon a_k,
\end{equation}
yielding
\[
X_n^2(t_n-v_n(\gamma))
\geq E_n(t_n-v_n(\gamma))-\varepsilon\sum_{k=1}^\infty a_k.
\]
Either $X_{n+1}(t_n)\geq0$ (and we are done), or $X_{n+1}<0$ and $E_n$
is nondecreasing on $[0,t_n]$, giving
\[
X_n^2(t_n-v_n(\gamma))
\geq E_n(0)-\varepsilon\sum_{k=1}^\infty a_k
\geq x_1^2-\varepsilon\sum_{k=1}^\infty a_k
=\gamma^2.
\]
$X_n$ is nonnegative since $t_{n-1}$, so we can resolve the sign
yielding $X_n(t_n-v_n(\gamma))\geq\gamma$.
Lemma~\ref{lem:positivity_from_n_to_n+1} applies and we conclude
$X_{n+1}(t_n)\geq0$.

We still have to prove that $X(t)\in l^2$ for $t\geq\tau$, but this is
an easy consequence of positivity: we know that for $t\geq t_{n-1}$,
$X_n(t)\geq0$ and hence $E_{n-1}'(t)\leq0$, so, from
inequality~\eqref{eq:En_unif_bdd}, we deduce
\[
E_{n-1}(\tau)
\leq E_{n-1}(t_{n-1}+\varepsilon^{-2}s_{n-1})
\leq\sum_{k=1}^\infty \varepsilon a_k.
\]
Letting $n$ go to infinity we get the result.
\end{proof}

\section{Regularity of positive solutions}\label{s:regularity}
When only solutions with all positive components are considered, we
know from~\cite{BarFlaMor2011TAMS} that even if the initial condition
is in $l^\infty$, all solutions become $l^2$ immediately and are
Leray-Hopf from that point onward.

Even if the solution is very regular at some time, energy can be
conserved only for some finite time-interval and then anomalous
dissipation starts. This phenomenon implies that some regularity was
lost, in particular it means that for all $\epsilon >0$ the quantities
\[
\sup_n k_n^{\frac{1}{3}+\epsilon} X_n(t)
\qquad\text{and}\qquad
\sum_n k_n^{\frac{1}{3}} X_n(t),
\]
become infinite in finite time, even if they were finite for $t=0$.

The results of this and the following sections will prove instead that
some regularity is still mantained, specifically,
\begin{enumerate}
\item for a.e.\ $t>0$, and all $\alpha>\frac13$,  $\sup_{n} n^{-\alpha}k_n^{\frac{1}{3}} X_n(t)<\infty$;\label{en:first}
\item for a.e.\ $t>0$, and all $\alpha>\frac43$,  $\sum_n n^{-\alpha}k_n^{\frac{1}{3}} X_n(t)<\infty$;\label{en:second}
\item $\sup_{n} k_n^{\frac{1}{3}-\frac1{3\beta}} X_n(t)\leq C t^{-1/3}$ for all $t>0$.\label{en:third}
\end{enumerate}
Statement~\ref{en:second} follows trivially from
statement~\ref{en:first}, which in turn is a consequence of
Theorem~\ref{thm_reg1}, proved in Corollary~\ref{cor_dia_pos1}.
Statement~\ref{en:third} follows from uniqueness
(Section~\ref{sec:uniqueness}) and Theorem~\ref{thm_inv_reg}, and is
proved in Theorem~\ref{cor_limit_X_3}.

\begin{thm}\label{thm_reg1}
Let $x=(x)_{n\geq1}\in l^2$ with $x_n\geq0$ for all $n$; let
$X=(X)_{n\geq1}$ be a weak solution of~\eqref{e:dyadic_unviscous} with
initial condition $x$. Then there exists a constant $c$ depending only
$||x||_{l^2}$ and $\beta$ such that for any positive, non-increasing
sequence $(a_n)_{n\geq1}$ the following inequality holds
\begin{equation}\label{eq_thm_stima}
%\mu\left(\cup_n \{t\in(0,+\infty)|X_n(t)>a_n\}\right) 
\mathcal L\{t>0|X_n(t)>a_n\text{ for some }n\}
\leq c \sum_n\frac{1}{k_na_n^3},
\end{equation}
where $\mathcal L$ denotes the Lebesgue measure.

The quantity $c=2^{7+\beta}||x||^2_{l^2}$ satisfies this theorem.
\end{thm}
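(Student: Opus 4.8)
The plan is to establish the stronger per--mode bound $\mathcal L(B_n)\le c\,k_n^{-1}a_n^{-3}$ for a suitable partition of the bad set $B:=\{t>0:X_n(t)>a_n\text{ for some }n\}$ into pieces $B_n$, and then sum. First I record the a priori bounds that carry most of the weight. Since $x_n\ge0$ for all $n$, Proposition~\ref{thm:positivity_mantained} gives $X_n(t)\ge0$ for every $n$ and $t\ge0$; hence, by~\eqref{eq:energy_least_comp_nonincr}, each $E_n$ is non-increasing with $E_n(t)\le E_n(0)\le\|x\|^2$, so that $X_n^2(t)\le\|x\|^2$ and, integrating $E_n'=-2k_nX_n^2X_{n+1}$,
\[
\int_0^\infty k_nX_n^2X_{n+1}\,ds\le\tfrac12\|x\|^2\qquad\text{for every }n.
\]
I may assume $\sum_n k_n^{-1}a_n^{-3}<\infty$ (otherwise nothing is to prove) and, replacing $a_n$ by $\max(a_n,a_N)$ for $n>N$ and letting $N\to\infty$ at the very end, that $\inf_n a_n>0$; since $X(t)\in l^2$, for every fixed $t$ only finitely many indices satisfy $X_n(t)>a_n$, so $B=\bigsqcup_n B_n$, where $B_n$ is the set of times for which $n$ is the \emph{largest} index with $X_n(t)>a_n$. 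The monotonicity of $(a_n)$ enters precisely here: on $B_n$ one has $X_m(t)\le a_m\le a_n<X_n(t)$ for all $m>n$, in particular $X_{n+1},X_{n+2}\le X_n$ on $B_n$.

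The engine of the argument is the identity obtained by differentiating $X_nX_{n+1}$ along~\eqref{e:dyadic_unviscous},
\[
\frac{d}{dt}(X_nX_{n+1})=k_nX_n^3+k_{n-1}X_{n-1}^2X_{n+1}-k_nX_nX_{n+1}^2-k_{n+1}X_nX_{n+1}X_{n+2},
\]
which manufactures the cubic term $k_nX_n^3$ needed to cash in $X_n>a_n$. Dropping the non-negative term $k_{n-1}X_{n-1}^2X_{n+1}$, using $X_{n+1},X_{n+2}\le X_n$ on $B_n$, and writing $k_nX_n^2X_{n+1}=-\tfrac12E_n'$, one gets on $B_n$
\[
k_nX_n^3\le\frac{d}{dt}(X_nX_{n+1})+(k_n+k_{n+1})X_n^2X_{n+1}=\frac{d}{dt}(X_nX_{n+1})-\tfrac{1+2^{\beta}}{2}\,E_n'.
\]
Since $X_n>a_n$ on $B_n$, Chebyshev's inequality then yields
\[
k_na_n^3\,\mathcal L(B_n)\le\int_{B_n}k_nX_n^3\,ds\le\int_{B_n}\frac{d}{dt}(X_nX_{n+1})\,ds+\tfrac{1+2^{\beta}}{2}\bigl(E_n(0)-E_n(\infty)\bigr)\le\int_{B_n}\frac{d}{dt}(X_nX_{n+1})\,ds+\tfrac{1+2^{\beta}}{2}\|x\|^2 .
\]

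Everything thus reduces to bounding the boundary contribution $\int_{B_n}\frac{d}{dt}(X_nX_{n+1})\,ds$ by $O(2^{\beta}\|x\|^2)$, and this I expect to be the main obstacle. Writing $B_n$ (up to a null set) as a countable disjoint union of intervals $(\alpha_j,\beta_j)$, the integral equals $\sum_j\bigl[X_nX_{n+1}(\beta_j)-X_nX_{n+1}(\alpha_j)\bigr]$; one discards the non-positive terms $-X_nX_{n+1}(\alpha_j)$, notes that at most one interval is unbounded (contributing at most $\|x\|^2$), and observes that at each finite exit time $\beta_j$ either $X_n(\beta_j)=a_n$, so that $X_nX_{n+1}(\beta_j)=a_nX_{n+1}(\beta_j)\le a_na_{n+1}\le\|x\|^2$, or $X_m(\beta_j)=a_m$ for some $m>n$, which forces a simultaneous visit to some $B_m$ with $m>n$. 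The hard point is to show that, summed over $j$, these endpoint values do not accumulate; I would do this by an energy--bookkeeping argument on excursions of $X_n$ above $a_n$: on the descending part of such an excursion one has both $E_n'\le-2a_nk_{n-1}X_{n-1}^2$ and $E_n'\le-2a_n^2k_nX_{n+1}$, so a genuine amount of the bounded, monotone quantity $E_n$ is dissipated, which caps the total weight of the excursions up to the single factor $2^{\beta}$ coming from the index shift $k_{n+1}/k_n$.

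Granting the estimate $\int_{B_n}\frac{d}{dt}(X_nX_{n+1})\,ds\le C'_\beta\|x\|^2$, one obtains $\mathcal L(B_n)\le C_\beta\|x\|^2\,k_n^{-1}a_n^{-3}$ with $C_\beta$ of the stated size $2^{7+\beta}$ after bookkeeping of constants, and summing over $n$ gives $\mathcal L(B)=\sum_n\mathcal L(B_n)\le 2^{7+\beta}\|x\|^2\sum_n k_n^{-1}a_n^{-3}$. Finally, the temporary hypothesis $\inf_n a_n>0$ is removed by applying this to $\tilde a_n:=\max(a_n,a_N)$ and letting $N\to\infty$, using $\mathcal L(\{\exists n\le N:X_n(t)>a_n\})\uparrow\mathcal L(B)$ together with $\sum_{n>N}k_n^{-1}a_N^{-3}\le\frac{2^{-\beta}}{1-2^{-\beta}}\,k_N^{-1}a_N^{-3}\to0$, the last quantity being a tail term of the convergent series $\sum_n k_n^{-1}a_n^{-3}$.
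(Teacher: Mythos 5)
Your reduction to a per--mode bound, the identity for $\frac{d}{dt}(X_nX_{n+1})$, and the Chebyshev step are fine, but the proof is not complete: the entire difficulty of the theorem sits in the estimate you explicitly ``grant'', namely $\int_{B_n}\frac{d}{dt}(X_nX_{n+1})\,ds\le C_\beta\|x\|^2$, and the excursion sketch offered for it does not close the gap. Two concrete problems. First, $B_n$ is not open (it is the open set $\{X_n>a_n\}$ intersected with the closed condition $X_m\le a_m$ for all $m>n$), so writing it up to a null set as a countable disjoint union of intervals and applying the fundamental theorem of calculus endpoint by endpoint is not justified; and if you enlarge to the open set $\{X_n>a_n\}$ you lose the inequality $X_{n+1},X_{n+2}\le X_n$ on which your pointwise bound relies. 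Second, and more seriously, the number of excursions is not controlled by your energy bookkeeping: during an excursion of $X_n$ above $a_n$ the dissipation rate of $E_n$ is $2k_nX_n^2X_{n+1}\ge 2k_na_n^2X_{n+1}$, which can be arbitrarily small when $X_{n+1}$ is small, while each exit endpoint still contributes a strictly positive amount (up to $a_na_{n+1}$, or up to $\|x\|a_{n+1}$ when the exit is caused by a higher mode becoming bad) to $\sum_j X_nX_{n+1}(\beta_j)$. Nothing in your argument excludes infinitely many shallow excursions, each dissipating a negligible amount of energy, so the sum of endpoint terms is not bounded; a fortiori the claimed constant $2^{7+\beta}$ is not obtained ``after bookkeeping of constants''.

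This is exactly where the paper's proof does its real work. It covers the bad set by intervals $[s_i,t_i)$ selected by stopping rules (first time $X_{n_i}$ halves, first time $X_{n_i+2}$ doubles, or after a time $2/(k_{n_i+1}a_{n_i})$) and shows that in every case the monotone quantity $E_{n_i}$ or $E_{n_i+1}$ drops by at least a fixed multiple of $a_{n_i}^2$; the delicate case is \eqref{eq_terzo_c}, where one proves via variation of constants that if $X_{n_i}$ stays above $\tfrac12X_{n_i}(s_i)$ and $X_{n_i+2}$ stays below $2X_{n_i}(s_i)$ for a time $2/(k_{n_i+1}a_{n_i})$, then $X_{n_i+1}$ is forced to build up enough to dissipate at least $2^{-5-2\beta}a_{n_i}^2$ of energy \emph{even if it starts out arbitrarily small}. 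That quantitative lower bound on dissipation per visit is what caps the number of visits to mode $n$ by $2^{6+2\beta}\|x\|^2/a_n^2$ and, combined with the length bound $2/(k_{n+1}a_n)$ for each interval, gives \eqref{eq_thm_stima}. To rescue your route you would need an analogous uniform lower bound on the energy dissipated per excursion (or per unit of positive endpoint contribution), and that is precisely the missing idea.
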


The sequences $(a_n)_{n\geq1}$ which are meaningful for this statement
are those for which the sum on the right-hand side is finite.  The
example which in particular is important for us is $a_n:=M
k_n^{-\frac{1-\epsilon}{3}}$ with $M$ large, for in that case one
proves that the bound $X_n(t)\leq M k_n^{-\frac{1-\epsilon}{3}}$ holds
for all $t$ except for a set of small measure $\frac{c'||x||^2}{M^3}$
and all $n\geq1$.

\begin{proof} We suppose that the sum $\sum_n\frac{1}{k_na_n^3}$ is finite.

Let $I:=\cup_n \{t\in(0,+\infty)|X_n(t)>a_n\}$. The set $I$ is open,
so it is possible to approximate it from inside with finite unions of
intervals. To prove inequality \eqref{eq_thm_stima} it will be
sufficient to show that $\mathcal L(J)\leq c \sum_n\frac{1}{k_na_n^3}$
for any possible set  $J\subseteq I$ which is the finite union of intervals.
Let $J$ be the union of a finite number of disjoint intervals
\[
J=\cup_{k=1}^m [b_k,c_k),
\]
with $J\subseteq I$. Since we changed $I$ with $J$, we enjoy the
following property: for all $M>0$, the set $J\setminus [0,M)$ either
  is empty, or it has a minimum.  

Let $T:=\sup(J)+\sup_n\{\frac{3}{k_n\cdot a_n}\}$ (a time large enough
to be sure that definition \eqref{eq_def_ti} below always yields
$t_i<T$).

We are going to define a family of intervals $[s_i,t_i)$ whose union
  covers $J$ and such that their total measure is less than $c
  \sum_n\frac{1}{k_na_n^3}$. To each interval we will associate one
  component $n_i$ of the solution, in such a way that we can control
  $X_{n_i}$ on the interval. The variables $s_i$, $t_i$ and $n_i$ will
  be defined by transfinite induction on $t_i$, starting from $t_0=0$.
  Notice that the definitions will ensure that $s_i\leq t_i \leq T$
  for all ordinals $i$, with $s_i=t_i$ if and only if
  $s_i=t_i=T$. Moreover, if $i$ and $j$ are ordinals with $i<j$, then
  $t_i\leq s_j$.

We now give the definition by transfinite induction. Let $i$ be an
ordinal and suppose we already defined $t_j$ for all $j<i$. Let
$J_i:=\{t\in J| t\geq t_j \ \forall j<i \}$, if $J_i$ is empty we
define $s_i=T$ and $t_i=T$, otherwise $J_i$ has a minimum and we can define
\begin{gather*}
s_i:=\min(J_i),\\
n_i:=\min\{n\geq1|X_n(s_i)>a_n, X_n(s_i)\geq X_{n+2}(s_i)\}.
\end{gather*}
The fact that $n_i$ is well-defined follows by $s_i\in I$, $x\in l^2$
and $(a_n)_n$ non-increasing. The most subtle step in the proof is the definition of $t_i$ below:
\begin{equation}\label{eq_def_ti}
t_{i}:=\min \left\{
\begin{array}{l}
\inf\{t>s_i|X_{n_{i}}(t)<\frac{1}{2}X_{n_{i}}(s_i)\},\\
\inf\{t>s_i|X_{n_{i}+2}(t)>2X_{n_{i}}(s_i)\},\\
s_i+\frac{2}{k_{n_{i}+1}\cdot a_{n_{i}}}
\end{array}
\right\}.
\end{equation}
The latter ensures that $t_{i}-s_i\leq\frac{2}{k_{n_{i}+1}\cdot a_{n_{i}}}$.

We now consider in separate cases which of the quantities attains the
minimum in \eqref{eq_def_ti}:
\begin{align} \label{eq_primo_c}
\text{if}&\quad X_{n_{i}}(t_{i})=\frac{1}{2}X_{n_{i}}(s_i)
&&\text{then}\quad 
E_{n_{i}}(s_{i})-E_{n_{i}}(t_{i}) \geq\frac{3}{4} a_{n_{i}}^2;\\
\label{eq_secondo_c}
\text{if}&\quad X_{n_{i}+2}(t_{i})=2X_{n_{i}+2}(s_i)
&&\text{then}\quad 
E_{n_{i}+1}(s_{i})-E_{n_{i}+1}(t_{i}) \geq3 a_{n_{i}}^2;\\
\label{eq_terzo_c}
\text{if}&\quad t_{i}=s_i+\frac{2}{k_{n_{i}+1}\cdot a_{n_{i}}}
&&\text{then}\quad 
E_{n_{i}}(s_{i})-E_{n_{i}}(t_{i}) \geq 2^{-5-2\beta}
a_{n_{i}}^2.
\end{align}
The first and the second one are immediate, we prove the latter.  If
$t_{i}=s_i+\frac{2}{k_{n_i}\cdot a_{n_i}}$ then for all
$s\in(s_i,t_{i})$ we have $X_{n_i}(s)\geq\frac{1}{2}X_{n_i}(s_i)$ and
$X_{n_i+2}(s)\leq2X_{n_i+2}(s_i)\leq2X_{n_i}(s_i)$, yielding
\begin{align*}
X'_{n_i+1}(s)&=
k_{n_i}X^2_{n_i}(s)
-k_{n_i+1}X_{n_i+1}(s)X_{{n_i}+2}(s)\\
&\geq \frac14k_{n_i}X_{n_i}^2(s_i) -2
k_{n_i+1}X_{n_i+1}(s)X_{n_i}(s_i).
\end{align*}
Since $X_{n_i+1}(s_i)\geq 0$, we have (for all $t>s_i$)
\begin{align*}
X_{n_i+1}(t)&\geq
\int_{s_i}^t \frac14k_{n_i}X^2_{n_i}(s_i) 
e^{- 2k_{n_i+1}X_{n_i}(s_i)\cdot(t-s)}ds\\
&\geq
\frac{X_{n_i}(s_i)}{2^{3+\beta}}
\left(
1-e^{-2k_{n_i+1}X_{n_i}(s_i)\cdot(t-s_i)}
\right).
\end{align*}
This inequality allows to lower bound 
$E_{n_i}(s_i)-E_{n_i}(t_{i})$ as follows
\begin{align*}
E_{n_i}(s_i)-E_{n_i}(t_{i})&=
\int_{s_i}^{t_{i}} 2k_{n_i}X^2_{n_i}(t)X_{n_i+1}(t) dt\\
&\geq
\frac{k_{n_i}X^3_{n_i}(s_i)}{2^{4+\beta}}
\int_{s_i}^{t_{i}}
(1-e^{-2k_{n_i+1}X_{n_i}(s_i)\cdot(t-s_i)}) dt\\
& \geq
\frac{k_{n_i}X^3_{n_i}(s_i)}{2^{4+\beta}} 
\cdot \frac{1}{4} \cdot \frac{2}{k_{n_i+1}a_n}
\geq  \frac{X^2_{n_i}(s_i)}{2^{5+2\beta}}.
\end{align*}
This proves inequality \eqref{eq_terzo_c}.

Since $E_n$ is non-increasing in $t$ and $E_n(0)\leq||x||_{l^2}^2$,
from inequalities \eqref{eq_primo_c}, \eqref{eq_secondo_c} and
\eqref{eq_terzo_c}, we deduce that for all $n\geq1$
\[
\sharp \{i|n_i=n\}
\leq \frac{||x||_{l^2}^2}{\frac{3}{4} a_n^2}+ \frac{||x||_{l^2}^2}{3 a_n^2}+ \frac{||x||_{l^2}^2}{2^{-5-2\beta} a_n^2}
\leq \frac{||x||_{l^2}^2}{a_n^2}2^{6+2\beta}.
\]
Finally we are able to sum the measure of all intervals $[s_i,t_i)$:
\begin{align*}
\sum_i (t_i-s_i)&=
\sum_{n\geq1} \sum_{\{i|n_{i}=n\}} (t_{i}-s_i)
\leq
\sum_{n\geq1} \frac{||x||_{l^2}^2}{a_n^2}2^{6+2\beta} \cdot \frac{2}{k_{n+1} a_{n}}\\
&\leq 2^{7+\beta} ||x||_{l^2}^2
\sum_n \frac{1}{k_{n}a^3_{n}}.\tag*{\qedhere}
\end{align*}
\end{proof}
\begin{cor}\label{cor_dia_pos1}
Let $x=(x_n)_{n\geq1}\in l^2$, with $x_n\geq0$ for all $n$.  Then for
all $\alpha>\frac13$ and for a.e.\ $t>0$, the following inequality
holds:
\[
\sup_n n^{-\alpha}k_n^{\frac13} X_n(t)<\infty.
\] 
\end{cor}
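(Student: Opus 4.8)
The plan is to apply Theorem~\ref{thm_reg1} with the sequence $a_n$ taken essentially equal to $M n^{\alpha}k_n^{-1/3}$ for a large parameter $M$, and then let $M\to\infty$ along a Borel--Cantelli-type exhaustion.

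First I would fix $\alpha>\frac13$ and note that the candidate sequence $n\mapsto n^{\alpha}k_n^{-1/3}=n^{\alpha}2^{-\beta n/3}$ tends to $0$ and is eventually strictly decreasing, but may fail to be non-increasing for small $n$. To meet the hypothesis of Theorem~\ref{thm_reg1} I would replace it by its decreasing envelope,
\[
a_n^{(M)}:=M\sup_{m\geq n} m^{\alpha}k_m^{-1/3},
\]
which is positive, non-increasing, satisfies $a_n^{(M)}\geq M n^{\alpha}k_n^{-1/3}$, and coincides with $M n^{\alpha}k_n^{-1/3}$ for every $n$ beyond some fixed $n_0=n_0(\alpha,\beta)$.

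Next I would estimate the series on the right-hand side of~\eqref{eq_thm_stima}. For $n>n_0$ one has $\frac{1}{k_n(a_n^{(M)})^3}=\frac{1}{M^3 n^{3\alpha}}$, while for $n\leq n_0$ the term is at most $\frac{C_1}{M^3 k_n}$. Since $3\alpha>1$, both $\sum_{n>n_0}n^{-3\alpha}$ and $\sum_{n\leq n_0}k_n^{-1}$ converge, so $\sum_n\frac{1}{k_n(a_n^{(M)})^3}\leq\frac{C_2}{M^3}$ with $C_2=C_2(\alpha,\beta)$. Writing $c$ for the constant in Theorem~\ref{thm_reg1} (depending only on $\|x\|_{l^2}$ and $\beta$), that theorem gives
\[
\mathcal L\big(\{t>0:X_n(t)>a_n^{(M)}\text{ for some }n\}\big)\leq\frac{c\,C_2}{M^3}.
\]

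Finally I would set $A_M:=\{t>0:X_n(t)\leq a_n^{(M)}\text{ for all }n\geq1\}$, the complement of the set just bounded, so $\mathcal L\big((0,\infty)\setminus A_M\big)\leq cC_2M^{-3}$. On $A_M$ one has, for every $n$, $n^{-\alpha}k_n^{1/3}X_n(t)\leq M\,n^{-\alpha}k_n^{1/3}\sup_{m\geq n}m^{\alpha}k_m^{-1/3}$, and the right-hand side equals $M$ for $n>n_0$ and is bounded over the finitely many $n\leq n_0$; hence $\sup_n n^{-\alpha}k_n^{1/3}X_n(t)<\infty$ for every $t\in A_M$. Since $a_n^{(M)}$ is a constant multiple of a sequence independent of $M$, the sets $A_M$ increase with $M$, and $\mathcal L\big((0,\infty)\setminus A_M\big)\to0$; therefore $\bigcup_{M\in\N}A_M$ has full measure in $(0,\infty)$, and on it the desired supremum is finite. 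The only mildly delicate points are the monotonicity bookkeeping, handled by passing to the decreasing envelope, and the observation that $\alpha>\frac13$ is precisely what makes $\sum_n\frac{1}{k_na_n^3}\asymp\sum_n n^{-3\alpha}$ converge — that is where the threshold $\frac13$ enters; the rest is a routine combination of Theorem~\ref{thm_reg1} with the exhaustion $M\to\infty$.
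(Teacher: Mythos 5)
Your proof is correct and takes essentially the same route as the paper: apply Theorem~\ref{thm_reg1} with $a_n=M n^{\alpha}k_n^{-1/3}$, note that $\alpha>\frac13$ makes $\sum_n\frac{1}{k_na_n^3}\asymp M^{-3}\sum_n n^{-3\alpha}$ finite, and let $M\to\infty$. Your replacement of $Mn^{\alpha}k_n^{-1/3}$ by its decreasing envelope is a sensible extra precaution (the paper applies the theorem to this sequence directly, even though it need not be non-increasing for small $n$), but it does not change the argument in any essential way.
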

\begin{proof}
Simply apply Theorem~\ref{thm_reg1} to the sequence $a_n:=M
n^\alpha k_n^{-\frac13}$ and let $M$ go to infinity.
\end{proof}
The following is a more subtle consequence of
Theorem~\ref{thm_reg1} that will be needed to prove uniqueness, in
the next section.

\begin{cor}\label{cor_limit_X_N}
There exists a constant $c=c(\beta)$ such that, if
$x=(x_n)_{n\geq1}\in l^2$, $x_n\geq0$ for all $n$, the following
inequality holds for all $n\geq1$ and $M>0$.
\[
\mathcal L(X_n>M):=\mathcal L\{t\geq 0|X_n(t)>M\}\leq \frac{c
||x||^2_{l^2}}{k_n M^3}.
\] 
\end{cor}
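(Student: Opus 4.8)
The plan is to deduce the estimate from Theorem~\ref{thm_reg1} by applying it to a single, cleverly chosen non-increasing sequence $(a_m)_{m\geq1}$ that takes the value $M$ exactly at the index $m=n$. Indeed, if $a_n=M$ then the event $\{t:X_n(t)>M\}$ is contained in the event $\{t:X_m(t)>a_m\text{ for some }m\}$ controlled by \eqref{eq_thm_stima}, so it will suffice to pick $(a_m)$ so that $\sum_m\frac1{k_m a_m^3}\leq C(\beta)\,k_n^{-1}M^{-3}$.

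The naive choice $a_m\equiv M$ fails, since then $\sum_m\frac1{k_m a_m^3}=M^{-3}\sum_m k_m^{-1}$ carries no decay in $n$; and one cannot simply ``spike'' the sequence by setting $a_m=+\infty$ for $m\neq n$, both because the $a_m$ must be real and because monotonicity forces $a_m\geq M$ for every $m<n$. The right device is to let $(a_m)$ decay like a power of $k_m$ on both sides of $n$ --- slightly slower than the critical exponent $\tfrac13$ for $m\geq n$ and slightly faster than it for $m<n$ --- so that both resulting geometric series collapse to a single factor $k_n^{-1}$. Concretely, fixing any $\epsilon\in(0,\tfrac13)$ (say $\epsilon=\tfrac16$), I would set
\[
a_m:=\begin{cases}M\,(k_n/k_m)^{\frac13+\epsilon},&m<n,\\[4pt] M\,(k_n/k_m)^{\frac13-\epsilon},&m\geq n.\end{cases}
\]
Since $k_n/k_m>1$ for $m<n$ and $<1$ for $m>n$ while both exponents are positive, this sequence is positive, strictly decreasing on each side of $n$, and equal to $M$ at $m=n$; in particular it is non-increasing, as required.

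It then remains to sum the series, which is a routine geometric computation. For $m\geq n$ one has $\frac1{k_m a_m^3}=k_m^{-3\epsilon}\big/(k_n^{1-3\epsilon}M^3)$, and $\sum_{m\geq n}k_m^{-3\epsilon}=k_n^{-3\epsilon}/(1-2^{-3\beta\epsilon})$, contributing $k_n^{-1}M^{-3}/(1-2^{-3\beta\epsilon})$. For $m<n$ one has $\frac1{k_m a_m^3}=k_m^{3\epsilon}\big/(k_n^{1+3\epsilon}M^3)$, and $\sum_{m=1}^{n-1}k_m^{3\epsilon}\leq k_n^{3\epsilon}/(2^{3\beta\epsilon}-1)$, contributing $k_n^{-1}M^{-3}/(2^{3\beta\epsilon}-1)$. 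Adding the two and invoking Theorem~\ref{thm_reg1} yields the corollary with, e.g., $c(\beta)=2^{7+\beta}\big(\tfrac1{1-2^{-3\beta\epsilon}}+\tfrac1{2^{3\beta\epsilon}-1}\big)$. The only genuine point of the argument is this choice of $(a_m)$: the monotonicity constraint in Theorem~\ref{thm_reg1} is exactly what rules out the naive spike, and one must spread the sequence over all indices while keeping enough tail decay for the sums to telescope down to $k_n^{-1}$ rather than $k_n^{-1+3\epsilon}$. (A scaling reduction turning $X_n$ into a first component does not help here, because the shifted, time-rescaled sequence solves a \emph{forced} dyadic system, outside the scope of Theorem~\ref{thm_reg1}.)
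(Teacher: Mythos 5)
Your proof is correct and rests on the same key ingredient as the paper, namely Theorem~\ref{thm_reg1} applied to a positive non-increasing sequence with $a_n=M$ chosen so that $\sum_m(k_ma_m^3)^{-1}\leq C(\beta)k_n^{-1}M^{-3}$; the computations with the power-law taper $a_m=M(k_n/k_m)^{\frac13\pm\epsilon}$ check out (monotonicity, positivity, and both geometric sums collapsing to $k_n^{-1}$) and give a constant depending only on $\beta$. The paper's own choice is even simpler: it sets $a_i=L$ for $i<n$ and $a_i=M$ for $i\geq n$, applies the theorem for each $L$, and lets $L\to\infty$ (legitimate since the left-hand side does not depend on $L$), so the ``spike'' you rule out is in fact available as a limit, the tail contributing $M^{-3}\sum_{i\geq n}k_i^{-1}\leq k_n^{-1}M^{-3}/(1-2^{-\beta})$.
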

\begin{proof}
Take  $L>M$ and define
\[
a_i:=
\begin{cases}
L\  &i<n\\
M\  &i\geq n.
\end{cases}
\]
Apply Theorem~\ref{thm_reg1} to get:
\begin{align*}
\mathcal L(X_n>M) & \leq
2^{8+\beta} ||x||^2_{l^2} \sum_i \frac{1}{k_ia_i^3}\\
 & \leq
2^{8+\beta} ||x||^2_{l^2} 
\left(
\sum_{i=1}^{n-1} \frac{1}{k_iL^3} +
\sum_{i=n}^{\infty} \frac{1}{k_iM^3}
\right).
\end{align*}
Taking the limit as $L$ goes to infinity, we get
\[
\mathcal L(X_n>M)
\leq 
\frac{c(\beta)||x||^2_{l^2}}{k_n M^3}\tag*{\qedhere}.
\] 
\end{proof}

\section{Uniqueness}%--------------------------------
\label{sec:uniqueness}
The next step is to prove the uniqueness of solutions of system
\eqref{e:dyadic_unviscous} with positive initial condition $x\in l^2$,
for all $\beta>1$. (The case $\beta\leq1$ was already proved
in~\cite{BarFlaMor2010CRAS}.)
\begin{thm}\label{thm_unicita}
Let $x=(x_n)_{n\geq1}\in l^2$ with $x_n\geq 0$ for all $n$. For all
$\beta>1$ there exists a unique weak solution $X$ of
\eqref{e:dyadic_unviscous} with initial condition $x$.
\end{thm}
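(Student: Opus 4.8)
The plan is to run a weighted-$l^2$ estimate on the difference of two solutions, using the single choice of weights $2^{-n}=k_n^{-1/\beta}$ that makes all the interior ``transport'' terms cancel exactly, and then to kill the one remaining boundary term using the a priori regularity obtained in Section~\ref{s:regularity}. If $x=0$ one checks directly (from $E_n(0)=0$, $E_n\geq0$ and $E_n$ nonincreasing) that $X\equiv0$ is the only weak solution; if $x\neq0$ then $X_m\equiv0$ for $m$ below the first non-zero index $\bar n$, and the shift $Z$ of Proposition~\ref{prp_x1_pos} reduces matters to $x_1>0$. In any case $x_n\geq0$ for all $n$, so by Proposition~\ref{thm:positivity_mantained} every weak solution has $X_n(t)\geq0$ for all $n$ and $t$, is finite-energy Leray-Hopf, and Theorem~\ref{thm_reg1} and Lemma~\ref{lem_intX3_bound} apply to it. Let $X,Y$ be two weak solutions with the same data, set $D_n:=X_n-Y_n$ and $S_n:=X_n+Y_n$ (so $0\leq|D_n|\leq S_n$), and for each $N\geq1$ introduce the finite, $C^1$ functional $\phi_N(t):=\sum_{n=1}^N2^{-n}D_n^2(t)$, which vanishes at $t=0$. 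From $X_{n-1}^2-Y_{n-1}^2=S_{n-1}D_{n-1}$ and $X_nX_{n+1}-Y_nY_{n+1}=\tfrac12(S_nD_{n+1}+S_{n+1}D_n)$ one gets $\dot D_n=k_{n-1}S_{n-1}D_{n-1}-\tfrac12k_nS_nD_{n+1}-\tfrac12k_nS_{n+1}D_n$.

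Next I would differentiate $\phi_N$ and reindex the first resulting sum: the weights $2^{-n}$ are tuned precisely so that all interior cross terms $k_nS_nD_nD_{n+1}$, $1\leq n\leq N-1$, cancel (because $2\cdot2^{-(n+1)}=2^{-n}$), and everything collapses to
\[
\phi_N'(t)=-\,2^{-N}k_N\,S_N\,D_N\,D_{N+1}\;-\;\sum_{n=1}^N2^{-n}k_nS_{n+1}D_n^2\;\leq\;2^{-N}k_N\,S_N\,|D_N|\,|D_{N+1}|,
\]
where the second, nonpositive, sum is discarded. Integrating in $t$ with $\phi_N(0)=0$ and then using $|D_N||D_{N+1}|\leq S_NS_{N+1}$,
\[
\phi_N(t)\;\leq\;2^{-N}k_N\int_0^t S_N|D_N||D_{N+1}|\,ds\;\leq\;2^{-N}k_N\int_0^t S_N^2S_{N+1}\,ds.
\]
Expanding $S_N^2S_{N+1}=(X_N+Y_N)^2(X_{N+1}+Y_{N+1})$ and using $a^2b\leq\tfrac23a^3+\tfrac13b^3$, the last integrand is dominated by a fixed linear combination of $X_N^3,Y_N^3,X_{N+1}^3,Y_{N+1}^3$. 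By Lemma~\ref{lem_intX3_bound} applied to $X$ and to $Y$ (the diagonal pieces can alternatively be handled unconditionally by $\int_0^tk_nX_n^2X_{n+1}\,ds=\tfrac12(E_n(0)-E_n(t))\leq\tfrac12\|x\|^2$, which follows from~\eqref{eq:energy_least_comp_nonincr}), one obtains $\int_0^tk_NS_N^2S_{N+1}\,ds\leq C\,N$ with $C=C(t,\|x\|,\beta)$ independent of $N$, hence $\phi_N(t)\leq C\,N\,2^{-N}$.

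Finally, since $\phi_N(t)\uparrow\phi(t):=\sum_{n\geq1}2^{-n}D_n^2(t)$ as $N\to\infty$ and $\phi(t)\leq\|X(t)-Y(t)\|^2\leq4\|x\|^2<\infty$, letting $N\to\infty$ in the bound above gives $\phi(t)\leq\liminf_NC\,N\,2^{-N}=0$ for every $t$; thus $\phi\equiv0$, i.e.\ $D_n\equiv0$ for all $n$, and $X=Y$. (The argument only uses $\beta>0$, so it recovers as well the known case $\beta\leq1$.) The genuinely delicate point I expect is the control of the boundary term $2^{-N}k_N\int_0^tS_N|D_N||D_{N+1}|\,ds$: for $\beta>1$ its prefactor $2^{-N}k_N=2^{(\beta-1)N}$ diverges, and it is exactly the a priori gain $\int_0^tk_nX_n^3\,ds=O(n)$ from Section~\ref{s:regularity} that forces $\int_0^tk_NS_N^2S_{N+1}\,ds$ to decay like $N$ and thereby beats it — crude bounds based only on $E_n$ being nonincreasing or on Corollary~\ref{cor_limit_X_N} do not suffice. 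The cancellation of the interior terms, by contrast, is rigid: it works only because $2^{-n}$ is the critical geometric weight $k_n^{-1/\beta}$, and one must additionally carry out the routine verification that $\phi_N$ is $C^1$ with the stated derivative and that the reindexing boundary contribution is the one displayed.
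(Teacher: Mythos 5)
Your proposal is correct and is essentially the paper's own proof: the same sum/difference decomposition, the same weighted functional $\sum_{n\le N}2^{-n}(X_n-Y_n)^2$ with the geometric weights producing the exact interior cancellation, the same boundary term bounded by cubes and controlled via Lemma~\ref{lem_intX3_bound}, giving the bound $CN2^{-N}\to0$. The only differences (the slightly coarser estimate $|D_N||D_{N+1}|\le S_NS_{N+1}$ before expanding, and the preliminary remarks on $x=0$ and the shift) are cosmetic and do not change the argument.
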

Before starting the proof, we need some estimate of $\int_0^T X^3_N
(t)dt$ for $N$ large. Let $N$ be an integer, $T>0$ and $c=c(\beta)$ be the
constant of Corollary~\ref{cor_limit_X_N}. From the corollary, we
deduce that for all $y\geq0$,
\[
\phi(y):=
\mathcal L\{t\in[0,T]|X^3_N(t)>y\}
\leq \min\left\{
\frac{c||x||^2_{l^2}}{k_Ny} , T
\right\}
\]
Observe that  $\phi(y)=0$ for all $y>||x||^3$, so that
\begin{multline*}
\int_0^T X^3_N (t)dt
=\int_0^\infty \phi(y) dy
=\int_0^{||x||^3} \phi(y) dy\\
\leq\int_0^{||x||^3}\min\left\{\frac{c||x||^2_{l^2}}{k_N y} , T\right\}dy
\leq \frac{c||x||^2}{k_N T}T+ \int_\frac{c||x||^2}{k_N
  T}^{||x||^3} \frac{c||x||^2_{l^2}}{k_N y} dy
\end{multline*}
where we supposed that $N$ is large enough that
$K_N T>c||x||_{l^2}^{-1} $. By integrating, we obtain the following lemma.
\begin{lem}\label{lem_intX3_bound}
Let $x=(x_n)_{n\geq1}\in l^2$ with $x_n\geq 0$ for all $n$. Let $T>0$,
let $c=c(\beta)$ be the constant from Corollary~\ref{cor_limit_X_N}. For
all $N\geq1$ such that $K_N T>c||x||_{l^2}^{-1} $ the following
inequality holds
\begin{equation}\label{eq_intX3_bound}
\int_0^T X^3_N (t)dt\leq
\frac{c(\beta)||x||^2}{k_N}
\left(
%1+\log(||x||)-\log(c(\beta))+\log(T)+N\beta\log(2)
1+\log\left(\frac{||x||T}{c(\beta)}\right)+N\beta\log(2)
\right)
\end{equation}
\end{lem}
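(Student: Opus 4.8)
The plan is to prove Lemma~\ref{lem_intX3_bound} by combining the Cavalieri (``layer cake'') representation of the integral with the pointwise tail estimate of Corollary~\ref{cor_limit_X_N}, exactly along the lines of the computation that precedes the statement. First I would record that, since $x_n\ge0$ for all $n$, the initial condition has infinitely many non-negative components, so $X$ is Leray-Hopf and $\|X(t)\|\le\|x\|$ for all $t$; in particular $X_N^3(t)\le\|x\|^3$ pointwise. Hence the distribution function
\[
\phi(y):=\mathcal L\{t\in[0,T]:X_N^3(t)>y\}
\]
vanishes for $y>\|x\|^3$, and $\int_0^T X_N^3(t)\,dt=\int_0^\infty\phi(y)\,dy=\int_0^{\|x\|^3}\phi(y)\,dy$.

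Next I would apply Corollary~\ref{cor_limit_X_N} with $M=y^{1/3}$, which gives $\mathcal L\{t\ge0:X_N(t)>y^{1/3}\}\le c\|x\|^2/(k_N y)$ and therefore $\phi(y)\le\min\{c\|x\|^2/(k_N y),\,T\}$. The two branches of this minimum cross at $y_0:=c\|x\|^2/(k_N T)$, and the hypothesis $k_N T>c\|x\|^{-1}$ is precisely what guarantees $y_0<\|x\|^3$, so splitting $\int_0^{\|x\|^3}=\int_0^{y_0}+\int_{y_0}^{\|x\|^3}$ is meaningful: on $[0,y_0]$ I bound $\phi$ by $T$, and on $[y_0,\|x\|^3]$ by $c\|x\|^2/(k_N y)$.

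It then remains to integrate. The first piece contributes $T\,y_0=c\|x\|^2/k_N$, while the second contributes
\[
\frac{c\|x\|^2}{k_N}\int_{y_0}^{\|x\|^3}\frac{dy}{y}
=\frac{c\|x\|^2}{k_N}\log\frac{\|x\|^3}{y_0}
=\frac{c\|x\|^2}{k_N}\log\frac{\|x\|\,k_N T}{c}.
\]
Using $k_N=2^{\beta N}$ to write $\log(\|x\|\,k_N T/c)=\log(\|x\|T/c)+N\beta\log 2$ and adding the two contributions yields exactly~\eqref{eq_intX3_bound}. I do not expect any genuine obstacle: the only points needing a word of care are the justification of the layer-cake identity for the non-negative measurable function $X_N^3$ on $[0,T]$ (Tonelli), the Leray-Hopf bound $X_N\le\|x\|$ that makes $\phi$ compactly supported, and the observation that $k_N T>c\|x\|^{-1}$ is exactly the condition placing the crossover point $y_0$ inside the interval $(0,\|x\|^3]$; everything else is elementary calculus.
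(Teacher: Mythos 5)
Your proposal is correct and follows essentially the same route as the paper: the layer-cake representation of $\int_0^T X_N^3$, the tail bound from Corollary~\ref{cor_limit_X_N} giving $\phi(y)\leq\min\{c\|x\|^2/(k_N y),T\}$, the cut at $y_0=c\|x\|^2/(k_N T)$ (which the hypothesis $k_NT>c\|x\|^{-1}$ places below $\|x\|^3$), and the elementary integration producing the $1+\log(\|x\|T/c)+N\beta\log 2$ factor. Your added remarks on Tonelli and the Leray-Hopf bound $X_N\leq\|x\|$ are just explicit versions of steps the paper leaves implicit.
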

\noindent
This lemma shows that as $N$ goes to infinity, the quantity $\int_0^T
X^3_N (t)dt$ tends to zero at least as $\frac{N}{k_N}$. This is in
accordance with the simple bound
\[
\int_0^T X^2_N X_{N+1} (t)dt\leq \frac{||x||^2}{k_N}
\]
which is immediate by energy balance.
\begin{proof}[Proof of Theorem~\eqref{thm_unicita}]
The first part of the proof follows ideas in~\cite{BarFlaMor2010CRAS}.

Let $X$ and $Y$ be two solution of system~\eqref{e:dyadic_unviscous} with
initial condition $x$. For all $n\geq1$ let us define $Z_n$ and $W_n$:
\begin{equation}\label{eq_def_ZW}
\begin{cases}
Z_n:=Y_n-X_n\\
W_n:=Y_n+X_n
\end{cases}
\qquad \text{so that} \qquad 
\begin{cases}
Y_n=\frac{W_n+Z_n}{2}\\
X_n=\frac{W_n-Z_n}{2}
\end{cases}
\end{equation}
It is easy to verify that $Z$ satysfies the following system
\begin{equation*}
  \begin{cases}
    Z_n'
      = k_{n-1} Z_{n-1}W_{n-1}
        -k_n \frac{Z_n W_{n+1}+W_n Z_{n+1}}{2},\\
    Z_n(0)=0,
  \end{cases}
  \qquad n\geq1,\quad t\geq0
\end{equation*}
Since $Y_n(t)=X_n(t)$ if and only if $Z_n(t)=0$, it will be sufficient
to prove that $Z_n(t)\equiv0$ for all $t\geq0$ and $n\geq1$. Let
\[
\psi_N(t):=\sum_{n=1}^N \frac{Z_n^2}{2^{n}},
\]
The functions $\psi_N(t)$ are non-negative, non-decreasing in $N$ and
such that $\psi_N(0)=0$.  We will prove that for all $t>0$, $\lim_{N\to\infty}
\psi_N(t)=0$ concluding that $\psi_N(t)\equiv0$ and $Z_n(t)\equiv
0$. We start by computing the derivative of $\psi_N$
\[
\psi'_N=
-\frac{k_N Z_N Z_{N+1} W_N}{2^N}
-\sum_{n=1}^N \frac{k_n}{2^n} Z_n^2W_{n+1}.
\]
We observe that $W_n(t)=X_n(t)+Y_n(t)$ is non-negative and use
\eqref{eq_def_ZW} to get
\begin{align*}
\psi'_N & \leq 
-2^{-N}k_N Z_N Z_{N+1} W_N
\leq 2^{-N}k_N (Y_N^2 X_{N+1}+ X_N^2 Y_{N+1})\\
 & \leq 2^{-N}k_N (Y_N^3+ X_{N+1}^3+ X_N^3 +Y_{N+1}^3).
\end{align*}
Since $\psi_N(0)=0$ we deduce
\[
\psi_N(t)%=\int_0^t \frac{d}{dt} \psi_N(s) ds
\leq 2^{-N}k_N \int_0^t \left[Y_N^3(s)+ X_{N+1}^3(s)+ X_N^3(s) +Y_{N+1}^3(s)\right] ds
\]
We can now apply Lemma~\ref{lem_intX3_bound} to get
\[
\psi_N(t) \leq 2^{-N}k_N \frac{N}{k_N} c(\beta,t,||x||) 
\]
where $c$ is a constant not depending on $N$. 

We conclude that $\lim_{N\to\infty} \psi_N(t)=0$ for all $t>0$.
\end{proof}

\section{The invariant region}\label{s:invariantregion}
In this section we work in the hypothesis $\beta\geq1$ and
$\sup_nk_n^{\frac13-\frac1{3\beta}} x_n<\infty$.  We want to
prove that $\sup_nk_n^{\frac13-\frac1{3\beta}} X_n(t)$ remains
finite and uniformly bounded in $t$.  (We already know from
Corollary~\ref{cor_dia_pos1} that this quantity is finite for
a.e.\ $t$.)

It is natural to consider the following change of variable:
$Y_n(t):=k_n^{\frac13-\frac1{3\beta}} X_n(t)=2^{\frac{\beta-1}3n}X_n(t)$. From
\eqref{e:dyadic_unviscous} we obtain that $(Y_n)_{n\geq1}$ solves

\begin{equation}\label{e:Y_system}
  \begin{cases}
    Y'_n
      = 2^{\frac{2\beta+1}{3}n-\frac{\beta+2}{3}} 
        \left(
        Y_{n-1}^2
        -2 Y_n Y_{n+1}
        \right),\\
    Y_n(0)=y_n,
  \end{cases}
  \qquad n\geq1,\quad t\geq0
\end{equation}
with $y_n:=2^{\frac{\beta-1}{3}n} x_n$.

For technical reasons we consider a finite dimensional (truncated) version
of the equations for $Y$. For every $N\geq1$ let $(Y_n^{(N)})_{1\leq n\leq N}$
be the solution to
\begin{equation}\label{e:Y_finite_system}
  \begin{cases}
    \frac d{dt} Y_n^{(N)}
      = 2^{\frac{2\beta+1}{3}n-\frac{\beta+2}{3}} 
        \left[
        \left(Y^{(N)}_{n-1}\right)^2
        -2 Y^{(N)}_n Y^{(N)}_{n+1}\right],\\
    Y^{(N)}_n(0)=y_n,
  \end{cases}
\end{equation}
for $n=1,\dots,N$, where for the sake of simplicity we have set
$Y_0^{(N)}=0$ and $Y_{N+1}^{(N)} = Y_N^{(N)}$, so to avoid writing the
border equations in a different form.  Let us now introduce the region
$A$ of $\R^2$ that will be invariant for the vectors
$(Y_n^{(N)},Y_{n+1}^{(N)})$,
\[
  A := \{ (x,y)\in[0,1]^2 : h(x)\leq y\leq g(x) \},
\]
\begin{figure}[htbp]
\begin{center}
\includegraphics[width=10cm]{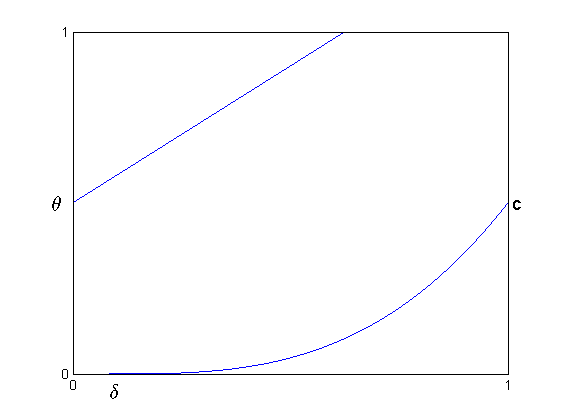}
\end{center}
\caption{Invariant Region} 
\end{figure}

where the functions $h$ and $g$ that provide the bounds of $A$
are defined as
\[
  g(x)
    = mx+\theta,
  \qquad\qquad
  h(x) =
      c\left(\frac{x-\delta}{1-\delta}\right)^3
\]
so that $g\leq1$ on $[0,\frac{1-\theta}m]$ and $h\geq0$ on $[\delta,1]$.

\begin{thm} \label{thm_inv_reg}
For $\delta=\frac{1}{12}$, $c=\frac{1}{2}$, $\theta=\frac{1}{2}$,
$m=\frac{4}{5}$ and for all $\beta\geq 1 $ the region $A$ is an
invariant region, that is if $(y_n,y_{n+1})$ belongs to $A$ for all
$n\leq N-1$ then $(Y^N_n(t), Y^N_{n+1}(t))$ belongs to $A$ for all
$n\leq N-1$ and $t\geq0$.
\end{thm}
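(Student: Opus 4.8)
The plan is to prove the invariance of $A$ by a standard ``barrier'' argument on the boundary of $A$: show that along the flow of the truncated system~\eqref{e:Y_finite_system}, whenever a pair $(Y_n^{(N)}, Y_{n+1}^{(N)})$ touches $\partial A$, the velocity vector points inward (or is tangent). Since the system is finite-dimensional and the right-hand side is locally Lipschitz, this is enough to conclude that trajectories starting inside $A$ remain in $A$ for all $t\geq 0$; one also has to rule out escape through $x=0$, $x=1$, $y=0$, $y=1$, so in practice I will check all pieces of $\partial A$. To organize this, for each $n$ I track the point $P_n=(Y_n^{(N)},Y_{n+1}^{(N)})$ whose two coordinates evolve by
\[
\dot Y_n^{(N)} = \lambda_n\bigl((Y_{n-1}^{(N)})^2 - 2 Y_n^{(N)} Y_{n+1}^{(N)}\bigr),
\qquad
\dot Y_{n+1}^{(N)} = \lambda_{n+1}\bigl((Y_n^{(N)})^2 - 2 Y_{n+1}^{(N)} Y_{n+2}^{(N)}\bigr),
\]
with $\lambda_n = 2^{\frac{2\beta+1}{3}n - \frac{\beta+2}{3}}>0$; the key point is that all the ``neighbouring'' values $Y_{n-1}^{(N)}$ and $Y_{n+2}^{(N)}$ that enter these formulas are themselves coordinates of pairs $P_{n-1}$ and $P_{n+1}$ assumed to lie in $A$, hence in $[0,1]$ and constrained by $h\leq\cdot\leq g$. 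So the inward-pointing check becomes a finite set of elementary inequalities in the numbers $Y_{n-1}^{(N)},Y_n^{(N)},Y_{n+1}^{(N)},Y_{n+2}^{(N)}$, each ranging over $A$-admissible values, and in the ratio $\lambda_{n+1}/\lambda_n = 2^{\frac{2\beta+1}{3}}\geq 2$ (using $\beta\geq1$), which is where the hypothesis on $\beta$ is used.

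Concretely I would split $\partial A$ into four arcs and handle each:

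\emph{(i) The lower boundary $y=h(x)$, $x\in[\delta,1]$.} Here I need $\frac{d}{dt}\bigl(Y_{n+1}^{(N)}-h(Y_n^{(N)})\bigr)\geq 0$, i.e.\ $\dot Y_{n+1}^{(N)} \geq h'(Y_n^{(N)})\,\dot Y_n^{(N)}$. Since on this arc $Y_{n+1}^{(N)}=h(Y_n^{(N)})$ is small (at most $c=\tfrac12$) while $Y_n^{(N)}$ can be as large as $1$, the term $(Y_n^{(N)})^2$ in $\dot Y_{n+1}^{(N)}$ dominates: using $Y_{n+2}^{(N)}\leq g(Y_{n+1}^{(N)})$ one gets a lower bound $\dot Y_{n+1}^{(N)}\geq \lambda_{n+1}\bigl((Y_n^{(N)})^2 - 2h(Y_n^{(N)})g(h(Y_n^{(N)}))\bigr)$, which should be positive (and large) for $Y_n^{(N)}$ bounded away from $\delta$. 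Near $x=\delta$ both $h$ and $h'$ vanish to high order (cubic), so the right-hand side $h'(Y_n^{(N)})\dot Y_n^{(N)}$ is $O((x-\delta)^2)$ and hence easily dominated; this is the only delicate balance, and the cubic choice of $h$ together with the precise constants $\delta=\tfrac1{12}$, $c=\tfrac12$ is exactly what makes it work.

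\emph{(ii) The upper boundary $y=g(x)=\tfrac45 x+\tfrac12$.} Here I need $\dot Y_{n+1}^{(N)}\leq g'(Y_n^{(N)})\dot Y_n^{(N)} = \tfrac45\,\dot Y_n^{(N)}$. Now $Y_{n+1}^{(N)}=g(Y_n^{(N)})\geq\tfrac12$ is bounded below, so the dissipative term $-2Y_{n+1}^{(N)}Y_{n+2}^{(N)}$ in $\dot Y_{n+1}^{(N)}$ is active, while on the right $\dot Y_n^{(N)}$ can be bounded below using $Y_{n-1}^{(N)}\geq h(Y_n^{(N)})$ and $\lambda_{n+1}\geq 2\lambda_n$. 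Writing everything in terms of $u:=Y_n^{(N)}\in[0,\tfrac{1-\theta}{m}]$ and the admissible ranges of $Y_{n-1}^{(N)}\in[h(u),g(u)]$, $Y_{n+2}^{(N)}\in[h(g(u)),g(g(u))]$, the inequality reduces to a polynomial inequality in $u$ (worst case at the extreme admissible values of the neighbours) that I expect to verify directly. \emph{(iii)–(iv) The vertical/horizontal edges} $x\in\{0,1\}$ and the parts of $\{y=0\}$, $\{y=1\}$ on $\partial A$: on $x=0$ one has $\dot Y_n^{(N)}=\lambda_n(Y_{n-1}^{(N)})^2\geq 0$ so $x$ increases; on $x=1$ one checks $\dot Y_n^{(N)}\leq 0$ using $Y_{n-1}^{(N)}\leq g(1)\wedge 1$ and $Y_{n+1}^{(N)}=g(1)\wedge h(1)$ bounded below; $y=0$ only meets $A$ at the single point $(\delta,0)$ which is handled by (i); $y=1$ is dealt with like (ii). The border conventions $Y_0^{(N)}=0$ (so $P_1$ sits on/above the arc where the left neighbour is $0$) and $Y_{N+1}^{(N)}=Y_N^{(N)}$ (so the relevant pair at the top is $(Y_{N-1}^{(N)},Y_N^{(N)})$ and the ``neighbour'' $Y_{N+1}^{(N)}=Y_N^{(N)}\in[0,1]$ still lies in the admissible range) must be checked not to create an exceptional case — they don't, precisely because $0$ and the diagonal point both satisfy the constraints defining $A$.

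The main obstacle is the lower-boundary estimate (i) near the left endpoint $x=\delta$: there the invariant curve is tangent to nothing obvious and one must show the cubic $h$ rises fast enough to stay below the true trajectory while not so fast that the ``transport'' term $h'\dot Y_n^{(N)}$ overtakes the source term $\lambda_{n+1}(Y_n^{(N)})^2$ in $\dot Y_{n+1}^{(N)}$. Quantitatively this is a one-variable inequality of the form $\lambda_{n+1}\bigl(u^2 - 2h(u)g(h(u))\bigr) \geq h'(u)\cdot\lambda_n\bigl((\text{something}\leq g(u)^2)\bigr)$ for $u\in[\delta,1]$; after dividing by $\lambda_n$ and using $\lambda_{n+1}/\lambda_n\geq 2$ it becomes $\beta$-free, and the constants $\delta=\tfrac1{12}$, $c=\theta=\tfrac12$, $m=\tfrac45$ are tuned so that it holds with room to spare. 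I would verify it by a routine (if slightly tedious) polynomial argument, possibly splitting $[\delta,1]$ into a neighbourhood of $\delta$ (where the cubic vanishing of $h$ and $h'$ is decisive) and the rest (where $u^2$ is bounded below). Once all four arcs are checked, Nagumo's invariance criterion (or an elementary direct argument: if the closed set $A$ were ever exited, consider the first exit time and contradict the inward-pointing velocity) gives the conclusion for each $N$.
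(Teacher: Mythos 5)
Your overall strategy is exactly the paper's: a barrier/inward-normal argument on the boundary of the product region $B=\{z:(z_n,z_{n+1})\in A\}$, bounding the neighbouring components through the constraints of $A$, using $\lambda_{n+1}/\lambda_n=2^{\frac{2\beta+1}{3}}\geq2$ to reduce to $\beta$-free one-variable polynomial inequalities (the paper's $\Phi_1,\dots,\Phi_4$), and handling $Y_n=1$ separately via $Y_{n+1}\geq h(1)=c=\tfrac12$. However, there is a genuine error in how you apply the constraints of $A$ to the \emph{left} neighbour $Y_{n-1}$. Membership of $(Y_{n-1},Y_n)$ in $A$ says $h(Y_{n-1})\leq Y_n\leq g(Y_{n-1})$, i.e.\ it constrains $Y_n$ in terms of $Y_{n-1}$, not the other way around; the admissible range of $Y_{n-1}$ given $Y_n=u$ is $[\max(0,(u-\theta)/m),\,\min(1,h^{-1}(u))]$, not $[h(u),g(u)]$ as you claim in case (ii), and in particular the bound ``$Y_{n-1}^2-2Y_nY_{n+1}\leq g(u)^2$'' you use in case (i) is false for part of the lower arc: e.g.\ for $u=Y_n\geq\tfrac12$ one has $h^{-1}(u)\geq1$, so $Y_{n-1}$ may equal $1>g(u)$ whenever $u<\tfrac{1-\theta}{m}=\tfrac58$. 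Consequently the one-variable inequalities you propose to verify do not cover all configurations allowed by the hypothesis, and symmetrically in (ii) the unjustified lower bound $Y_{n-1}\geq h(Y_n)$ excludes the admissible case $Y_{n-1}\in[0,h(u))$.

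The gap is repairable, and the repair is what the paper does: use only the crude but correct bounds $0\leq Y_{n-1}\leq1$ (together with the correct constraints on $Y_{n+1},Y_{n+2}$, which you do state correctly). With $Y_{n-1}^2\leq1$ on the arc $Y_{n+1}=h(Y_n)$ and $Y_{n-1}^2\geq0$ on the arc $Y_{n+1}=g(Y_n)$, the scalar products against the inward normals are bounded below by explicit degree-$6$ and degree-$4$ polynomials ($\Phi_2$ and $\Phi_4$ in the paper) which are positive on $[\delta,1]$ and $[0,\tfrac{1-\theta}{m}]$ respectively for the given constants; these checks, which you leave as ``expected to hold'', are precisely where $\delta=\tfrac1{12}$, $c=\theta=\tfrac12$, $m=\tfrac45$ enter and must be carried out (the paper verifies them as explicit polynomials). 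A small further slip: $\{y=0\}$ meets $A$ in the whole segment $\{(x,0):0\leq x\leq\delta\}$, not only at one point; this is harmless, since nonnegativity of each component is preserved by the structure of the equations, as in Proposition~\ref{thm:positivity_mantained}.
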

\begin{proof}
For notation semplicity we drop the superscript $N$ along the proof.
We follow and improve ideas from~\cite{BarMorRom}. Consider the set
\[
B:=\{z\in[0,1]^N:(z_n,z_{n+1})\in A, n=1,2,\dots,N-1\}
\]
We want to prove that $Y\in B$ for all times, knowing that this is
true at time zero. This will follow from the fact that $\frac
d{dt}Y$ points inward on all the border of $B$.

By the definition of $B$, a point $Y=(Y_1,\dots,Y_N)$ is on the border
if (at least) one of the following constraints holds with equality
\begin{enumerate}
\item $h(Y_n)\leq Y_{n+1}$, for all $n\leq N-1$;
\item $Y_{n+1}\leq g(Y_n)$, for all $n\leq N-1$;
\item $0\leq Y_n\leq1$, for all $n\leq N$.
\end{enumerate}

For the first two cases, we must check that for $n=1,2,\dots,N-1$, the
derivative in time of the vector $(Y_n,Y_{n+1})$ points inward on the
border of $A$ when $Y\in B$. Steps 1 and 2 below deal with these cases.

In the third case, the positivity is trivial by
Proposition~\ref{thm:positivity_mantained}, while the other constraint
can be easily checked in dimension 1, as we do in step 3.

\smallskip\emph{Step 1. On the constraint $Y_{n+1}=h(Y_n)$.}

\noindent Let us start studing the border `along $h$' that is $Y_n\in[\delta,1]$
and $Y_{n+1}=h(Y_n)$. The inward normal is given by
%$\vec{n}_h:=(-h'(Y_n),1)=(-3c\frac{(Y_n-\delta)^2}{(1-\delta)3},1)$.
$(-h'(Y_n),1)$.  From equation~\eqref{e:Y_finite_system} we obtain
\[
\left(
\begin{array}{c}
{Y}'_n\\
{Y}'_{n+1}\\
\end{array}
\right)
\sim
\left(
\begin{array}{c}
Y^2_{n-1}-2 Y_nY_{n+1}\\
2^{\frac{2\beta+1}{3}}(Y^2_{n}-2 Y_{n+1}Y_{n+2})\\
\end{array}
\right)
\]
where $\sim$ means that the two vectors have the same direction and
sense.

Consider the scalar product:
\begin{multline} \label{eq_scal_prod_h}
\left\langle(-h'(Y_n),1),\ ({Y}'_n,{Y}'_{n+1})\right\rangle =\\
-h'(Y_n)\cdot \left(Y^2_{n-1}-2 Y_nY_{n+1} \right)
 +2^{\frac{2\beta+1}{3}}(Y^2_{n}-2 Y_{n+1}Y_{n+2})\\
\geq -h'(Y_n)\cdot \left(1-2 Y_nh(Y_n) \right)
 +2^{\frac{2\beta+1}{3}}(Y^2_{n}-2 h(Y_n)g(h(Y_n)))
\end{multline}
Where the inequality comes from the fact that $(Y_1,\dots,Y_N)\in B$
and $Y_{N+1}=Y_N$.

Since we want the right-hand side of~\eqref{eq_scal_prod_h} to be
positive for arbitrarily large $\beta$, first of all we must check
that $\Phi_1(x):=x^2-2h(x)g(h(x))$ is positive for all
$x\in[\delta,1]$ and this is not difficult to verify, since $\Phi_1$
is just a real polinomial of degree 6. (See it plotted in
Figure~\ref{fig:funz_di_controllo}-a.)

As $\Phi_1>0$, we shall lower bound the right-hand side
of~\eqref{eq_scal_prod_h} by setting $\beta=1$, yielding
\[
\left\langle(-h'(Y_n),1),\ ({Y}'_n,{Y}'_{n+1})\right\rangle 
\geq -h'(Y_n)\cdot \left(1-2Y_nh(Y_n) \right)+2\Phi_1(Y_n)
=:\Phi_2(Y_n)
\]
where again $\Phi_2$ is a real polinomial of degree 6 not depending
on $\beta$ and positive on all $[\delta,1]$. (See
 Figure~\ref{fig:funz_di_controllo}-b.)

\begin{figure}[htbp]\label{fig:funz_di_controllo}
\begin{center}
\includegraphics[width=\textwidth]{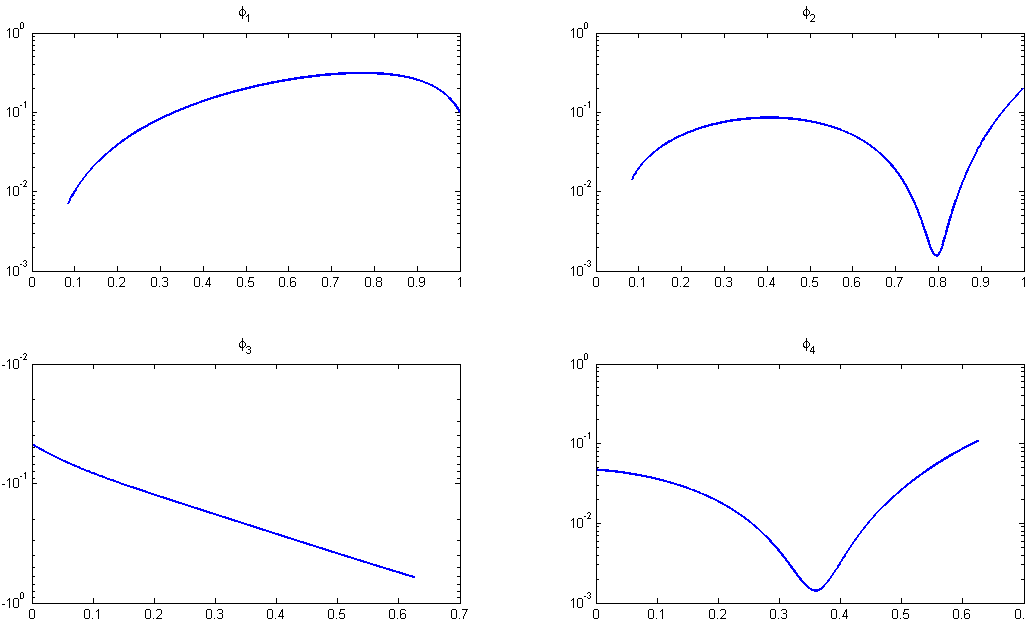}
\end{center}
\caption{Control functions} 
\end{figure}

\smallskip\emph{Step 2. On the constraint $Y_{n+1}=g(Y_n)$.}

\noindent Let us now turn to the border `along $g$' that is
$Y_n\in\left[0,\frac{1-\theta}{m}\right]$ and $Y_{n+1}=g(Y_n)$. The
inward normal is given by $(m,-1)$, whereas the scalar
product is given by:
\begin{multline} \label{eq_scal_prod_g}
\left\langle(m,-1),\ ({Y'}_n,{Y'}_{n+1})\right\rangle =\\
m\left(Y^2_{n-1}-2Y_nY_{n+1} \right)
-2^{\frac{2\beta+1}{3}}(Y^2_{n}-2Y_{n+1}Y_{n+2})\\
\geq m \left(0-2Y_ng(Y_n) \right)
-2^{\frac{2\beta+1}{3}}(Y^2_{n}-2g(Y_n)h(g(Y_n)))
\end{multline}
In analogy with the previous case, we define
$\Phi_3(x):=x^2-2g(x)h(g(x))$ and check that this degree 4 polynomial
is negative for $x\in\left[0,\frac{1-\theta}{m}\right]$. We can then
lower bound~\eqref{eq_scal_prod_g} by setting $\beta=1$, yielding
\[
\left\langle(m,-1),\ ({Y'}_n,{Y'}_{n+1})\right\rangle
\geq-2m Y_n g(Y_n) -2\Phi_3(Y_n)
=:\Phi_4(Y_n)
\]
One can check that $\Phi_4$ is a degree 4 polynomial positive on the
interval $\left[0,\frac{1-\theta}{m}\right]$.  (See
Figure~\ref{fig:funz_di_controllo}-c and
\ref{fig:funz_di_controllo}-d.)

\smallskip\emph{Step 3. On the constraint $Y_n=1$.}

\noindent We must prove that $Y_n'\leq0$ when $Y_n=1$ and $Y\in B$,
but these hypothesis imply that $Y_{n+1}\geq h(1)=c$, so that
\[
{Y'}_n
= 2^{\frac{2\beta+1}{3}n-\frac{\beta+2}{3}}\left( Y^2_{n-1}-2Y_nY_{n+1}\right)
\leq 2^{\frac{2\beta+1}{3}n-\frac{\beta+2}{3}}(1-2c)
=0
\]
This completes the proof.
\end{proof}

With standard techniques, making the limit on $N$ in the above theorem
it is possible to obtain the following:
\begin{cor} \label{cor_limit_X_2}
Let $\beta\geq 1$ and let $x=(x_n)_{n\geq1}$ be such that $x_n\geq0$
for all $n\geq1$ and $\sup_nk_n^{\frac13-\frac1{3\beta}}x_n<\infty$. If $X$ 
is the weak solution, then
\[
\sup_nk_n^{\frac13-\frac1{3\beta}}X_n(t) 
\leq 12\sup_nk_n^{\frac13-\frac1{3\beta}}x_n
\]
for all $t\geq 0$.
\end{cor}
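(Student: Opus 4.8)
The plan is to derive Corollary~\ref{cor_limit_X_2} from the invariant-region result Theorem~\ref{thm_inv_reg} by a scaling trick, followed by a passage to the limit $N\to\infty$ in the truncated system~\eqref{e:Y_finite_system}.

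Write $M:=\sup_n k_n^{\frac13-\frac1{3\beta}}x_n=\sup_n y_n$. If $M=0$ then $x=0$ and there is nothing to prove, so assume $0<M<\infty$. The key remark is that, although $A$ carries the lower constraint $y\geq h(x)$ which the datum $(y_n)_n$ has no reason to satisfy, one can rescale so that the datum lands in the part of $A$ where that constraint is vacuous: $h(x)\leq0$ whenever $x\leq\delta=\tfrac1{12}$, while $g(x)\geq\theta=\tfrac12>\tfrac1{12}$ for every $x\geq0$, so $[0,\tfrac1{12}]^2\subseteq A$. Set $\lambda:=\delta/M=\tfrac1{12M}$ and $V_n(t):=\lambda Y_n(\lambda t)$. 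Since the coefficients in~\eqref{e:Y_system} depend on $n$ only, the quadratic homogeneity used for the $W$-rescaling in Proposition~\ref{prp_x1_pos} shows that $(V_n)_{n\geq1}$ solves~\eqref{e:Y_system} with initial datum $(\lambda y_n)_n$, and $0\leq\lambda y_n\leq\lambda M=\tfrac1{12}$ for every $n$, so $(\lambda y_n,\lambda y_{n+1})\in A$ for all $n$.

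Next I would pass through the truncations. For each $N$, let $V^{(N)}=(V^{(N)}_n)_{1\leq n\leq N}$ solve~\eqref{e:Y_finite_system} with datum $(\lambda y_1,\dots,\lambda y_N)$. Since $(\lambda y_n,\lambda y_{n+1})\in A$ for $n\leq N-1$, Theorem~\ref{thm_inv_reg} yields $(V^{(N)}_n(t),V^{(N)}_{n+1}(t))\in A\subseteq[0,1]^2$ for all $n\leq N-1$ and $t\geq0$, hence $0\leq V^{(N)}_n(t)\leq1$ for all $n\leq N$, $t\geq0$. Feeding these uniform bounds back into~\eqref{e:Y_finite_system} bounds $\tfrac d{dt}V^{(N)}_n$ uniformly, component by component, on each finite time interval, so by compactness (Arzel\`a--Ascoli) and a diagonal extraction some subsequence of $(V^{(N)})_N$ converges, componentwise and locally uniformly in $t$, to a sequence $V=(V_n)_{n\geq1}$ which solves~\eqref{e:Y_system} with datum $(\lambda y_n)_n$ and satisfies $0\leq V_n(t)\leq1$ for all $n,t$. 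Via $Y_n=k_n^{\frac13-\frac1{3\beta}}X_n$, this limit equals $(k_n^{\frac13-\frac1{3\beta}}W_n)_n$ for some weak solution $W$ of~\eqref{e:dyadic_unviscous} with initial condition $\lambda x$; by uniqueness (Theorem~\ref{thm_unicita} for $\beta>1$, and~\cite{BarFlaMor2010CRAS} for $\beta=1$), $W$ is the rescaling $W_n(t)=\lambda X_n(\lambda t)$ from Proposition~\ref{prp_x1_pos}, so the limit coincides with $V$. Therefore $Y_n(\lambda t)=\lambda^{-1}V_n(t)\leq\lambda^{-1}=12M$ for all $t\geq0$, that is $\sup_n k_n^{\frac13-\frac1{3\beta}}X_n(t)=\sup_n Y_n(t)\leq12M=12\sup_n k_n^{\frac13-\frac1{3\beta}}x_n$ for all $t\geq0$, as claimed. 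The constant $12$ is $1/\delta$, hence fixed by the choice $\delta=\tfrac1{12}$ in Theorem~\ref{thm_inv_reg}.

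The one genuinely delicate step --- the ``standard techniques'' announced before the statement --- is the limit $N\to\infty$: showing that the truncated solutions (or a subsequence) converge to a bona fide weak solution of the infinite system and that this limit is the solution coming from $X$. This is where the uniqueness results enter; in the borderline case $\beta=1$, where the datum lies only in $l^\infty$, one additionally needs the instantaneous $l^\infty\to l^2$ regularization of positive solutions recalled at the start of Section~\ref{s:regularity} before uniqueness can be invoked. The remaining ingredients (the scaling identities, the inclusion $[0,\tfrac1{12}]^2\subseteq A$) are routine.
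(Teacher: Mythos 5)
Your proposal is correct and follows essentially the same route as the paper: rescale by $\delta/M=1/12M$ so the transformed datum lands in $[0,\delta]^2\subset A$, apply Theorem~\ref{thm_inv_reg} to the truncations, pass to the limit $N\to\infty$ and use uniqueness to identify the limit with the rescaled solution, then undo the scaling to get the constant $12=1/\delta$. You merely spell out the compactness/identification step (and the $\beta=1$ borderline) that the paper dismisses as ``standard techniques,'' which is a faithful filling-in rather than a different argument.
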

\begin{proof}
Remember that equation~\eqref{eq_cambio_sca_2} defines a rescaling
which is still a solution of the original system. Let
$L:=\sup_nk_n^{\frac13-\frac1{3\beta}}x_n$ and $\delta=1/12$ as
in Theorem~\ref{thm_inv_reg}. Then $W(t):=\frac\delta LX(\frac\delta L
t)$ is the unique solution of system~\eqref{e:dyadic_unviscous} with
initial condition $\frac\delta Lx$. As in the beginning of this
section, define $Y=(Y_n)_{n\geq1}$, with
$Y_n:=k_n^{\frac13-\frac1{3\beta}}W_n$. Uniqueness for
system~\eqref{e:Y_system} follows from uniqueness for
system~\eqref{e:dyadic_unviscous}, so $Y$ is the unique solution of
the former, with initial condition $y=(y_n)_{n\geq1}$, defined by
$y_n=\frac\delta Lk_n^{\frac13-\frac1{3\beta}}x_n\in[0,\delta]$. Since
$[0,\delta]^2\subset A$, Theorem~\ref{thm_inv_reg} applies and thanks
to uniqueness, it is standard to prove that $Y^{(N)}$ converges to
$Y$, so we get $Y_n(t)\leq1$ uniformly in $n$ and $t$, which is what
we had to prove.
\end{proof}
\begin{thm}\label{cor_limit_X_3}
Let $X$ be the unique solution of system~\eqref{e:dyadic_unviscous}
with initial condition $x$ with non-negative components. Then there
exists a costant $c(\beta)>0$ such that the following inequality holds
for all $t>0$:
\[
\sup_nk_n^{\frac13-\frac1{3\beta}}X_n(t)\leq c(\beta)\|x\|^{\frac{2}{3}}t^{-1/3}.
\]
\end{thm}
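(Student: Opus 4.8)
The plan is to combine the time-translation/rescaling invariance of the system with the uniform bound from Corollary~\ref{cor_limit_X_2}. The idea is: given an arbitrary time $t>0$, I want to use the a.e.-finiteness of $\sup_n k_n^{\frac13-\frac1{3\beta}}X_n(s)$ (Corollary~\ref{cor_dia_pos1}) to pick a good starting time $s<t$, then run the invariant-region estimate from $s$ onward. But the naive version of this is not quantitative; the trick is to make the choice of $s$ interact with the $W(t)=\frac\delta L X(\frac\delta L t)$ rescaling so that the constant $L=\sup_n k_n^{\frac13-\frac1{3\beta}}X_n(s)$ that appears gets controlled.

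First I would fix $t>0$ and consider the solution restarted at time $s\in(0,t)$, i.e. $\tilde X(\cdot):=X(s+\cdot)$, which is again a (the) solution with initial condition $X(s)$, and $\|X(s)\|\leq\|x\|$ since the solution is Leray-Hopf (positivity, Proposition~\ref{thm:positivity_mantained}). Applying Corollary~\ref{cor_limit_X_2} to $\tilde X$ gives, for every $u\geq0$,
\[
\sup_n k_n^{\frac13-\frac1{3\beta}}X_n(s+u)\leq 12\,\sup_n k_n^{\frac13-\frac1{3\beta}}X_n(s).
\]
So the whole game is to find some $s\in(0,t)$ — in fact $s\leq t/2$ — for which $L(s):=\sup_n k_n^{\frac13-\frac1{3\beta}}X_n(s)$ is small, of order $(\|x\|^2/t)^{1/3}$. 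For this I would integrate Corollary~\ref{cor_limit_X_N}: the function $y\mapsto\mathcal L\{s\in[0,t/2]:k_n^{\frac13-\frac1{3\beta}}X_n(s)>y\}$ is bounded by $\min\{c\|x\|^2 k_n^{\frac1\beta-1}/(k_n y^3), t/2\}=\min\{c\|x\|^2 k_n^{-1/\beta}y^{-3}, t/2\}$. Wait — one must be careful that $k_n^{\frac13-\frac1{3\beta}}X_n>y$ means $X_n>k_n^{-(\frac13-\frac1{3\beta})}y$, so Corollary~\ref{cor_limit_X_N} gives the bound $c\|x\|^2 k_n^{-1}\big(k_n^{-(\frac13-\frac1{3\beta})}y\big)^{-3}=c\|x\|^2 k_n^{-1/\beta}y^{-3}$. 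Summing over $n$ (the geometric factor $\sum_n k_n^{-1/\beta}=\sum_n 2^{-n}$ converges) and integrating in $y$ from $y_0$ to $\infty$ shows that $\mathcal L\{s\in[0,t/2]: L(s)>y_0\}\leq c'\|x\|^2 y_0^{-2}$ for a suitable $\beta$-dependent constant. Choosing $y_0$ so that this is $<t/2$, i.e. $y_0=(2c'\|x\|^2/t)^{1/2}$...

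Here I realize the exponent is off and I should instead integrate the tail differently: set $y_0:=K(\|x\|^2/t)^{1/3}$ for a constant $K=K(\beta)$ to be chosen. Then, using $\mathcal L\{L(\cdot)>y\}\le c''\|x\|^2 k\text{-sum}\cdot y^{-3}$ is not summable-then-integrable in the right way; the clean route is: for the measure of the bad set on $[0,t/2]$ we only need one value, $\mathcal L\{s\le t/2: L(s)>y_0\}\le \sum_n \min\{c\|x\|^2 k_n^{-1/\beta}y_0^{-3},\,t/2\}$, and since each term is at most $c\|x\|^2 k_n^{-1/\beta}y_0^{-3}$ we get $\le c'''\|x\|^2 y_0^{-3}$. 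Choosing $y_0:=\big(2c'''\|x\|^2/t\big)^{1/3}$ makes this $< t/2$, so there exists $s\in[0,t/2]$ with $L(s)\le y_0$. Then by the displayed consequence of Corollary~\ref{cor_limit_X_2} applied with $u=t-s\ge t/2\ge0$,
\[
\sup_n k_n^{\frac13-\frac1{3\beta}}X_n(t)\le 12\,L(s)\le 12\,y_0=12\big(2c'''\big)^{1/3}\|x\|^{2/3}t^{-1/3},
\]
which is the claim with $c(\beta)=12(2c''')^{1/3}$.

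I expect the only real subtlety to be bookkeeping of the exponents in the $y\mapsto$ tail estimate — making sure that the substitution $X_n>k_n^{-(\frac13-\frac1{3\beta})}y$ turns Corollary~\ref{cor_limit_X_N}'s $1/k_n$ into the summable $k_n^{-1/\beta}$, and that the resulting bound in $y_0$ has the $y_0^{-3}$ power needed to extract the $t^{-1/3}$ scaling. Everything else (Leray-Hopf monotonicity of $\|X(s)\|$, the restart of the solution at time $s$, and the application of Corollary~\ref{cor_limit_X_2}) is routine. One small hypothesis check: Corollary~\ref{cor_limit_X_2} requires $\beta\ge1$; for $\beta<1$ the exponent $\frac13-\frac1{3\beta}$ is negative and one should note that the statement is presumably intended (as the section heading suggests) in the regime $\beta\ge1$, or else treat the trivial/vacuous cases separately.
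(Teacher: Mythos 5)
Your proposal is correct and follows essentially the same route as the paper: use the level-set measure estimate (you via Corollary~\ref{cor_limit_X_N} summed over $n$, the paper via Theorem~\ref{thm_reg1} with $a_n=M/r_n$, $r_n=12k_n^{\frac13-\frac1{3\beta}}$ — the same union bound, with $\sum_n k_n^{-1/\beta}$ convergent) to find a time $s\leq t$ where $\sup_n k_n^{\frac13-\frac1{3\beta}}X_n(s)\lesssim(\|x\|^2/t)^{1/3}$, then propagate to time $t$ by the invariant-region bound of Corollary~\ref{cor_limit_X_2} at the cost of a factor $12$. The restart at time $s$ (with $\|X(s)\|\leq\|x\|$ and uniqueness) that you make explicit is implicit in the paper's inequality $\psi(t)\leq12\psi(s)$, and your caveat about $\beta\geq1$ matches the section's standing hypothesis.
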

\begin{proof}
Let $\psi(t):=\sup_nk_n^{\frac13-\frac1{3\beta}}X_n(t)$.  By
Corollary~\ref{cor_limit_X_2} we have,
\[
\psi(t)\leq12\psi(s)\qquad \forall \ s\in[0,t].
\]
To complete the proof it is sufficient to fix $c(\beta)$ in such a way
that there exists $s\in[0,t]$ such that
\begin{equation} \label{eq_bound_last_theorem}
12\psi(s)
\leq c(\beta)\|x\|^{\frac{2}{3}}t^{-1/3}
\end{equation}
Let us consider Theorem~\ref{thm_reg1}. Letting $a_n=M/r_n$, the thesis rewrites
\[
\mathcal L\{s>0|\sup_{n\geq1}\{r_nX_n(s)\}>M\}
\leq 2^{8+\beta} \|x\|^2\sum_{n\geq1}\frac{r_n^3}{k_nM^3}\, ,
\]
Now let
\begin{align*}
r_n&=12k_n^{\frac13-\frac1{3\beta}},
&
M&=c(\beta)\|x\|^{\frac{2}{3}}t^{-1/3},
&
c(\beta)>12\cdot2^{\frac{8+\beta}{3}},
\end{align*}
we get
\[
\mathcal L\Bigl\{s>0\Bigl|12\psi(s)>c(\beta)\|x\|^{\frac{2}{3}}t^{-1/3}\Bigr.\Bigr\}
\leq 12^3\frac{2^{8+\beta}}{c(\beta)^3}t
<t,
\]
hence there exists $s\in[0,t]$ such that
inequality~\eqref{eq_bound_last_theorem} holds.
\end{proof}

%\nocite{CheFriPav2007}
%\nocite{CheFriPav2010}
%\nocite{FriPav}
%\nocite{KisZla}
%\nocite{LotRoz}
%\nocite{Wal}

\bibliographystyle{plain}
\bibliography{dyadic}

\end{document}